\newtheorem{theorem}{Theorem}           
\newtheorem{corollary}{Corollary}
\theoremstyle{definition}
\newtheorem{definition}{Definition}
\newtheorem{example}{Example}
\begin{document}

\title[Orthogonal hybrid functions]{Piecewise linear approximate solution of fractional order non-stiff and stiff differential-algebraic equations by orthogonal hybrid functions}


\author{Seshu Kumar Damarla and Madhusree Kundu}

\address{Seshu Kumar Damarla, Department of Chemical Engineering, C.V. Raman College of Engineering, Bhubaneswar-752054, Odisha, India,\\
\email{seshukumar.damarla@gmail.com}}

\address{Madhusree Kundu, Department of Chemical Engineering, National Institute of Technology, Rourkela-769008, Odisha, India,\\
\email{mkundu@nitrkl.ac.in}}

\CorrespondingAuthor{Seshu Kumar Damarla}


\date{31.01.2018}                               

\keywords{Orthogonal hybrid functions; generalized hybrid function operational matrices; fractional order differential-algebraic equations}

\subjclass{26A33, 42C05, 74H15, 35E15}


\begin{abstract}
        A simple yet effective numerical method using orthogonal hybrid functions consisting of piecewise constant orthogonal sample-and-hold functions and piecewise linear orthogonal triangular functions is proposed to solve numerically fractional order non-stiff and stiff differential-algebraic equations. The complementary generalized one-shot operational matrices, which are the foundation for the developed numerical method, are derived to estimate the Riemann-Liouville fractional order integral in the new orthogonal hybrid function domain. It is theoretically and numerically shown that the numerical method converges the approximate solutions to the exact solution in the limit of step size tends to zero. Numerical examples are solved using the proposed method and the obtained results are compared with the results of some popular semi-analytical techniques used for solving fractional order differential-algebraic equations in the literature. Our results are in good accordance with the results of those semi-analytical methods in case of non-stiff problems and our method provides valid approximate solution to stiff problem (fractional order version of Chemical Akzo Nobel problem) which those semi-analytical methods failsto solve.
\end{abstract}

\maketitle



\section{Introduction}\label{sec1}
In this paper, we solve the fractional order differential-algebraic equations of the following form
\begin{equation} \label{GrindEQ__1_} 
{}_{0}^{C} D_{t}^{\alpha } y_{i} \left(t\right)=f_{i} \left(t,y_{1} \left(t\right),y_{2} \left(t\right),\ldots ,y_{n} \left(t\right)\right), i=1,2,\ldots ,n-1, n\in Z^{+} ,            
\end{equation} 
\begin{equation} \label{GrindEQ__2_} 
0=g_{i}\left(t,y_{1} \left(t\right),y_{2} \left(t\right),\ldots y_{n} \left(t\right)\right), \alpha \in \left(0,1\right], t\in \left[0,1\right].              
\end{equation} 
Here $y_{i} \left(t\right)$ is the $i^{th} $ unknown function, $f_{i} $, $g_{i} $ can be linear or nonlinear functions, ${}_{0}^{C} D_{t}^{\alpha } $ is the Caputo fractional order derivative \cite{ref1}
\begin{equation} \label{GrindEQ__3_} 
{}_{0}^{C} D_{t}^{\alpha } f\left(t\right)=J^{1-\alpha } D^{1} f\left(t\right)=\frac{1}{\Gamma \left(1-\alpha \right)} \int _{0}^{t}\left(t-\tau \right)^{1-\alpha -1} f^{\left(1\right)} \left(\tau \right)d\tau  ,             
\end{equation} 
where $J^{\alpha } f\left(t\right)$ is Riemann-Liouville fractional order integral, \\
$J^{\alpha } f\left(t\right)=\frac{1}{\Gamma \left(\alpha \right)} \int _{0}^{t}\left(t-\tau \right)^{\alpha -1}  f\left(\tau \right)d\tau $.

\noindent The fractional order differential-algebraic equation is a special class of fractional differential equations. The fractional differential equations are the ordinary differential equations involving integrals and/or derivatives of arbitrary order. The subject which deals with the theory of arbitrary order differentiation and arbitrary order integration is fractional calculus \cite{ref2, ref3}. The applications of the subject can be broadly categorized into modelling physical phenomena \cite{ref4}-\cite{ref13} and fractional order controllers \cite{ref14}-\cite{ref16}. For many decades, the subject; fractional calculus did not receive much attention as there were no analytical and numerical methods available to analyze the physical processes, which exhibit fractional order behavior, modelled by the fractional calculus concepts. Hence, devising analytical and numerical methods to solve fractional order integral equations, fractional order integro-differential equations, fractional order ordinary and partial differential equations and fractional order differential-algebraic equations has been an active research area. Abdelkawy et al. (2015) developed a numerical technique based on shifted Jacobi polynomials and spectral collocation method to solve Abel's integral equations of first kind \cite{ref17}. Agarwal et al. (2015) solved fractional Volterra integral equations and non-homogeneous time fractional heat equation using $P_{\alpha} $-transform (integral transform of pathway type) \cite{ref18}. Legendre wavelets have been used by Yi et al. (2016) for numerical solution of fractional order integro-differential equations with weakly singular kernels \cite{ref19}. Using Chebyshev polynomials with spectral tau method, a direct solution technique has been developed in \cite{ref20} to solve multi-order fractional differential equations. Unlike fractional order integral equations, fractional order integro-differential equations and fractional order differential equations, the fractional order partial differential equations are more problematic and more efficient analytical and numerical methods are needed to solve them. In this regard, Fu et al. (2013) proposed a solution procedure to solve time fractional diffusion equations \cite{ref21}. They have used the Laplace transform to convert the fractional diffusion equation into time-independent inhomogeneous equation and then employed truly boundary-only meshless boundary particle method to solve the obtained inhomogeneous equation. In \cite{ref22}, the Kansa method has been used for the first time in the solution of fractional diffusion equation. Efforts are continuously made in proposing analytical, semi-analytical and numerical techniques for solving time fractional order partial differential equations \cite{ref23}-\cite{ref27}. Formulating mathematical models for some real processes which are memory or history based and which calls for the use of fractional derivative and/or fractional integral while mathematically describing them naturally leads to the appearance of fractional order differential-algebraic equations. Unless those fractional order differential-algebraic equations are either analytically or numerically solvable, there is no another way except performing experiments for deep perception of those physical processes. It is very uncommon that the all sorts of fractional order differential-algebraic equations bear analytical solutions. Unsurprisingly many pure and applied mathematicians have been motivated by this inevitable hurdle to wider the range of applicability of the existing semi-analytical and numerical techniques to solve numerically fractional order differential-algebraic equations \cite{ref28}-\cite{ref33}. As we know that none of the numerical methods can solve all categories of fractional differential-algebraic equations, so there is a constant need for more accurate and computationally effective numerical methods which work for most fractional order differential-algebraic equations. 

\noindent Deb et al. \cite{ref34, ref35} proposed the orthogonal hybrid functions (HFs), which are actually a linear combination of the piecewise constant orthogonal sample-and-hold functions and the piecewise linear orthogonal right-handed triangular functions, to find the numerical solution of the linear ordinary differential equations. The orthogonal HFs were further applied for time-invariant, time-varying, delay and delay-free system analysis and identification \cite{ref36}. Realizing the power of the orthogonal HFs, we aim to extend the application of the orthogonal HFs to the fractional order differential-algebraic equations. We accomplish the objective in two steps. The first step is to find a highly accurate approximation by using the orthogonal HFs for the Riemann-Liouville fractional order integral and the second step encompasses the development of the numerical method using the derived HFs estimate for fractional order integral. The proposed numerical method does not require the computation of fractional integrals or fractional derivatives and recursive relations, thus greatly reducing CPU usage, and transforms the given fractional order differential-algebraic equation into a system of algebraic equations which can be solved with minimum effort. The remaining part of the paper is arranged in the following way. Section \ref{sec2} gives a brief introduction to the new orthogonal hybrid functions and their properties. The important result of the paper is given in Section \ref{sec3}. Based on the result of Section \ref{sec3}, an elegant numerical method is developed in Section \ref{sec4}. Section \ref{sec5} studies the convergence of the HF approximate solution of the fractional order differential-algebraic equations. A set of fractional order differential-algebraic equations are solved by the proposed numerical method in Section \ref{sec6}. Section \ref{sec7} presents the concluding remarks. 
\section{A brief review of orthogonal hybrid functions}\label{sec2}
\begin{definition}\label{def1}
 Let $S_{i} \left(t\right)$ and $T_{i} \left(t\right)$ be the $i^{th} $ component of the set of piecewise constant sample-and-hold functions, $S_{\left(m\right)} \left(t\right)$, and the set of piecewise linear right-handed triangular functions, $T_{\left(m\right)} \left(t\right)$, respectively, and be defined as    
\end{definition}           
\begin{equation} \label{GrindEQ__4_} 
S_{i} \left(t\right)=\left\{\begin{array}{cc} {1,} & {i{\rm f\; }t\in \left[ih,\left(i+1\right)h\right),} \\ {0,} & {{\rm otherwise,}} \end{array}\right. , T_{i} \left(t\right)=\left\{\begin{array}{cc} {\left(\frac{t-ih}{h} \right),} & {i{\rm f\; }t\in \left[ih,\left(i+1\right)h\right),} \\ {0,} & {{\rm otherwise,}} \end{array}\right. ,            
\end{equation} 
where $i\in \left[0,m-1\right]$, $m$ is the number of subintervals of the interval $t\in \left[0,T\right]$, $h=T/m$.        
\begin{definition}\label{def2}
The $i^{th} $ orthogonal hybrid function (HF) is defined as
\end{definition}
\begin{equation}\label{GrindEQ__5_}
H_{i} \left(t\right)=c_{i} S_{i} \left(t\right)+d_{i} T_{i} \left(t\right),
\end{equation}
where $c_{i} $ and $d_{i} $ are real arbitrary constants.
\begin{definition}\label{def3}
Let us consider a time function, $f\left(t\right)$, of Lebesgue measure, which is defined on $t\in \left[0,T\right]$. The approximation for $f\left(t\right)$ in the orthogonal HF domain is derived as
\end{definition}
\begin{equation} \label{GrindEQ__6_} 
f\left(t\right)\approx \sum _{i=0}^{m-1}\left(c_{i} S_{i} \left(t\right)+d_{i} T_{i} \left(t\right)\right) =C_{S}^{T} S_{\left(m\right)} \left(t\right)+C_{T}^{T} T_{\left(m\right)} \left(t\right),              
\end{equation}
where $C_{S}^{T} =\left[\begin{array}{ccccc} {c_{0} } & {c_{1} } & {c_{2} } & {\cdots } & {c_{m-1} } \end{array}\right]$, $C_{T}^{T} =\left[\begin{array}{ccccc} {d_{0} } & {d_{1} } & {d_{2} } & {\cdots } & {d_{m-1} } \end{array}\right]$, \\
$S_{\left(m\right)} \left(t\right)=\left[\begin{array}{ccccc} {S_{0} \left(t\right)} & {S_{1} \left(t\right)} & {S_{2} \left(t\right)} & {\cdots } & {S_{m-1} \left(t\right)} \end{array}\right]^{Tp}$, $c_{i} =f\left(ih\right)$, $d_{i} =c_{i+1} -c_{i} $,\\
$T_{\left(m\right)} \left(t\right)=\left[\begin{array}{ccccc} {T_{0} \left(t\right)} & {T_{1} \left(t\right)} & {T_{2} \left(t\right)} & {\cdots } & {T_{m-1} \left(t\right)} \end{array}\right]^{Tp}$, $\left[\cdots \right]^{Tp} $ implies transpose. 
\begin{definition}\label{def4}
Let $P_{1ss\left(m\right)} $, $P_{1st\left(m\right)} $, $P_{1ts\left(m\right)} $ and $P_{1tt\left(m\right)} $ be the complementary one-shot operational square matrices of size $m\times m$. The HF estimate for the first order integral of $f\left(t\right)$ is
\end{definition}
\begin{equation} \label{GrindEQ__7_} 
\int _{0}^{t}f\left(\tau \right)d\tau \approx \left(P_{1ss\left(m\right)} C_{S}^{T} +P_{1ts\left(m\right)} C_{T}^{T} \right) S_{\left(m\right)} \left(t\right)+\left(P_{1st\left(m\right)} C_{S}^{T} +P_{1tt\left(m\right)} C_{T}^{T} \right)T_{\left(m\right)} \left(t\right),           
\end{equation} 
where $P_{1ss\left(m\right)} =h\left[\left[\left. \left. \begin{array}{ccccc} {0} & {1} & {1} & {\cdots } & {1} \end{array}\right]\right]\right. \right. $, $P_{1st\left(m\right)} =h\left[\left[\left. \left. \begin{array}{ccccc} {1} & {0} & {0} & {\cdots } & {0} \end{array}\right]\right]\right. \right. $, \\
$P_{1ts\left(m\right)} =\frac{h}{2} \left[\left[\left. \left. \begin{array}{ccccc} {0} & {1} & {1} & {\cdots } & {1} \end{array}\right]\right]\right. \right. $, $P_{1tt\left(m\right)} =\frac{h}{2} \left[\left[\left. \left. \begin{array}{ccccc} {1} & {0} & {0} & {\cdots } & {0} \end{array}\right]\right]\right. \right. $, 
$\left[\left[\left. \left. \begin{array}{ccc} {a} & {b} & {c} \end{array}\right]\right]\right. \right. =\left[\begin{array}{ccc} {a} & {b} & {c} \\ {0} & {a} & {b} \\ {0} & {0} & {a} \end{array}\right]$.\\
The following are some useful properties of orthogonal HFs which bring the ability to HFs to solve the fractional order differential-algebraic equations \cite{ref36}.\\
The piecewise constant sample-and-hold functions and the piecewise linear right-handed triangular functions are orthogonal, for $i,j\in \left[0,m-1\right] $,
\begin{equation} \label{GrindEQ__8_} 
\int _{0}^{T}S_{i} \left(t\right)S_{j} \left(t\right)dt=\left\{\begin{array}{cc} {h,} & {i{\rm f\; }i=j,} \\ {0,} & {{\rm if\; }i\ne j,} \end{array}\right.   \int _{0}^{T}T_{i} \left(t\right)T_{j} \left(t\right)dt=\left\{\begin{array}{cc} {\frac{h}{3} ,} & {i{\rm f\; }i=j,} \\ {\frac{h}{6} ,} & {{\rm if\; }i\ne j.} \end{array}\right.   
\end{equation} 
The components of $S_{\left(m\right)} \left(t\right)$ and $T_{\left(m\right)} \left(t\right)$ are mutually disconnected,
\begin{equation} \label{GrindEQ__9_} 
S_{i} \left(t\right)S_{j} \left(t\right)=\left\{\begin{array}{cc} {S_{i} \left(t\right),} & {i{\rm f\; }i=j,} \\ {0,} & {{\rm if\; }i\ne j,} \end{array}\right.  T_{i} \left(t\right)T_{j} \left(t\right)=\left\{\begin{array}{cc} {T_{i} \left(t\right),} & {i{\rm f\; }i=j,} \\ {0,} & {{\rm if\; }i\ne j,} \end{array}\right.
\end{equation} 
The product $S_{i} \left(t\right)T_{j} \left(t\right)$ can be expanded into the orthogonal HFs as
\begin{equation} \label{GrindEQ__10_} 
S_{i} \left(t\right)T_{j} \left(t\right)=\left\{\begin{array}{cc} {T_{i} \left(t\right),} & {i{\rm f\; }i=j,} \\ {0,} & {{\rm if\; }i\ne j,} \end{array}\right.  i,j\in \left[0,m-1\right]. 
\end{equation} 
A function $g\left(t\right)=\left(h\left(t\right)\right)^{n} $, where $n$ can be an integer or a non-integer, is approximated by means of orthogonal HFs as follows.
\begin{equation} \label{GrindEQ__11_} 
g\left(t\right)=\left[\begin{array}{cccc} {c_{0} } & {c_{1} } & {\cdots } & {c_{m-1} } \end{array}\right]S_{\left(m\right)} \left(t\right)+\left[\begin{array}{cccc} {d_{0} } & {d_{1} }  & {\cdots } & {d_{m-1} } \end{array}\right]T_{\left(m\right)} \left(t\right),     
\end{equation} 
where $c_{j} =\left(h\left(jh\right)\right)^{n} $, $d_{j} =c_{j+1} -c_{j} $, $j=0,1,2,\ldots \ldots ,m-1$.\\
The set of time functions; $f_{1} \left(t\right),f_{2} \left(t\right),f_{3} \left(t\right),\cdots \cdots ,f_{p-1} \left(t\right),f_{p} \left(t\right)$, where $p$ is an integer, is defined on $\left[0,T\right]$. The function $N\left(t,f_{1} \left(t\right),f_{2} \left(t\right),f_{3} \left(t\right),\ldots \ldots ,f_{p} \left(t\right)\right)$, which can be linear or nonlinear, can be expanded into orthogonal TFs domain as
\begin{equation}\label{GrindEQ__12_}
N\left(t,f_{1} \left(t\right),\ldots ,f_{p} \left(t\right)\right)=\left[\begin{array}{ccc} {c_{0} } & {\cdots } & {c_{m-1} } \end{array}\right]S_{\left(m\right)} \left(t\right)+\left[\begin{array}{ccc} {d_{0} } & {\cdots } & {d_{m-1} } \end{array}\right]T_{\left(m\right)} \left(t\right), 
\end{equation}
where $c_{j} =N\left(jh,f_{1} \left(jh\right),f_{2} \left(jh\right),\ldots ,f_{p} \left(jh\right)\right)$, $d_{j} =c_{j+1} -c_{j} $, $j=0,1,2,\ldots,m-1$.

\section{Generalized one-shot operational matrices for fractional integration of $f\left(t\right)$}\label{sec3}
In this section, we generalize the one-shot operational matrices in \eqref{GrindEQ__7_} to the general case of fractional order integration of $f\left(t\right)$. The generalized one-shot operational matrices are the basis for the numerical method we shall develop in the next section.
\begin{theorem}\label{thm1}
The fractional integral of order $\alpha $ of the set of sample-and-hold functions, $S_{\left(m\right)} \left(t\right)$, is approximated via the orthogonal TFs as
\begin{equation} \label{GrindEQ__13_} 
J^{\alpha } S_{\left(m\right)} \left(t\right)=\frac{1}{\Gamma \left(\alpha \right)} \int _{0}^{t}\left(t-\tau \right)^{\alpha -1} S_{\left(m\right)} \left(\tau \right)d\tau  =P_{\alpha ss\left(m\right)} S_{\left(m\right)} \left(t\right)+P_{\alpha st\left(m\right)} T_{\left(m\right)} \left(t\right),         
\end{equation} 
where\\
 $P_{\alpha ss\left(m\right)} =\frac{h^{\alpha } }{\Gamma \left(\alpha +1\right)} \left[\left[\left. \left. \begin{array}{cccc} {0} & {\varsigma _{1} } & {\cdots } & {\varsigma _{m-1} } \end{array}\right]\right]\right. \right. $, $\varsigma _{k} =\left(k^{\alpha } -\left(k-1\right)^{\alpha } \right)$, $k\in \left[1,m-1\right]$,\\
$P_{\alpha st\left(m\right)} =\frac{h^{\alpha } }{\Gamma \left(\alpha +1\right)} \left[\left[\left. \left. \begin{array}{cccc} {1} & {\xi _{1} } & {\cdots } & {\xi _{m-1} } \end{array}\right]\right]\right. \right. , \xi _{k} =\left(k+1\right)^{\alpha } -2k^{\alpha } +\left(k-1\right)^{\alpha } , k\in \left[1,m-1\right]$.\\
\end{theorem}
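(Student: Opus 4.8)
The plan is to compute the Riemann--Liouville fractional integral $J^{\alpha}S_{i}(t)$ explicitly on each subinterval and then re-expand the resulting piecewise function in the hybrid basis $\{S_{(m)}(t),T_{(m)}(t)\}$ via the sampling formula \eqref{GrindEQ__6_}. First I would fix an index $i$ and recall that $S_{i}(\tau)$ is the indicator of $[ih,(i+1)h)$, so that
\begin{equation*}
J^{\alpha}S_{i}(t)=\frac{1}{\Gamma(\alpha)}\int_{0}^{t}(t-\tau)^{\alpha-1}S_{i}(\tau)\,d\tau
=\frac{1}{\Gamma(\alpha)}\int_{ih}^{\min\{t,(i+1)h\}}(t-\tau)^{\alpha-1}\,d\tau ,
\end{equation*}
which vanishes for $t\le ih$. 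Carrying out the elementary integral $\int (t-\tau)^{\alpha-1}d\tau=-\tfrac{1}{\alpha}(t-\tau)^{\alpha}$ gives, for $t\ge(i+1)h$, the value $\tfrac{1}{\Gamma(\alpha+1)}\bigl[(t-ih)^{\alpha}-(t-(i+1)h)^{\alpha}\bigr]$, and for $ih\le t<(i+1)h$ the value $\tfrac{1}{\Gamma(\alpha+1)}(t-ih)^{\alpha}$. This is the only genuine computation in the argument.

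Next I would evaluate this closed form at the grid points $t=kh$ to obtain the sample-and-hold coefficients $c_{k}^{(i)}:=J^{\alpha}S_{i}(t)\big|_{t=kh}$. For $k\le i$ we get $c_{k}^{(i)}=0$; for $k\ge i+1$ we get $c_{k}^{(i)}=\tfrac{h^{\alpha}}{\Gamma(\alpha+1)}\bigl[(k-i)^{\alpha}-(k-i-1)^{\alpha}\bigr]$, i.e.\ $c_{k}^{(i)}=\tfrac{h^{\alpha}}{\Gamma(\alpha+1)}\varsigma_{k-i}$ with $\varsigma_{j}=j^{\alpha}-(j-1)^{\alpha}$. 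Reading off the matrix whose $(k,i)$ entry realizes the map $S_{i}\mapsto \sum_{k}c_{k}^{(i)}S_{k}$, one sees it is the upper-triangular Toeplitz matrix with zero diagonal and constant superdiagonals $\varsigma_{1},\varsigma_{2},\dots$, which is exactly $P_{\alpha ss(m)}$ in the notation $\bigl[\!\bigl[\,\cdots\,\bigr]\!\bigr]$ introduced after \eqref{GrindEQ__7_}. For the triangular part, by the hybrid representation \eqref{GrindEQ__6_} the coefficient $d_{k}^{(i)}$ on $T_{k}$ is the forward difference $c_{k+1}^{(i)}-c_{k}^{(i)}$ of the nodal values; a short case analysis gives $d_{k}^{(i)}=0$ for $k<i$, $d_{i}^{(i)}=\tfrac{h^{\alpha}}{\Gamma(\alpha+1)}\cdot 1$ (since $c_{i}^{(i)}=0$ and $c_{i+1}^{(i)}=\tfrac{h^{\alpha}}{\Gamma(\alpha+1)}$), and for $k>i$,
\begin{equation*}
d_{k}^{(i)}=\frac{h^{\alpha}}{\Gamma(\alpha+1)}\bigl[\varsigma_{k-i+1}-\varsigma_{k-i}\bigr]
=\frac{h^{\alpha}}{\Gamma(\alpha+1)}\bigl[(k-i+1)^{\alpha}-2(k-i)^{\alpha}+(k-i-1)^{\alpha}\bigr]
=\frac{h^{\alpha}}{\Gamma(\alpha+1)}\xi_{k-i},
\end{equation*}
which identifies the triangular operational matrix as the upper-triangular Toeplitz matrix with diagonal $1$ and superdiagonals $\xi_{1},\xi_{2},\dots$, namely $P_{\alpha st(m)}$. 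Assembling the two pieces yields \eqref{GrindEQ__13_}.

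I expect the main obstacle to be purely bookkeeping rather than conceptual: keeping the index shifts consistent between "which subinterval $t$ lies in" ($t=kh$ meaning the right endpoint of block $k-1$ and the left endpoint of block $k$, where $T_{k}$ is supported), so that the Toeplitz structure and in particular the boundary cases $k=i$ and $k=i+1$ come out with the stated entries $1$ on the diagonal of $P_{\alpha st(m)}$ and $0$ on the diagonal of $P_{\alpha ss(m)}$. One should also note that since the hybrid interpolation in \eqref{GrindEQ__6_} is exact at the nodes and linear in between, the formula \eqref{GrindEQ__13_} is an approximation exactly to the extent that $t\mapsto(t-ih)^{\alpha}$ fails to be affine on each subinterval; this is where the "$\approx$" in \eqref{GrindEQ__13_} enters, and it is worth remarking that for $\alpha=1$ one recovers $\varsigma_{k}\equiv 1$, $\xi_{k}\equiv 0$, and hence exactly the integer-order matrices $P_{1ss(m)},P_{1st(m)}$ of \eqref{GrindEQ__7_}, a useful consistency check to include.
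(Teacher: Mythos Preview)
Your proposal is correct and follows essentially the same approach as the paper: compute $J^{\alpha}S_{i}(t)$ explicitly as a piecewise power function, sample it at the nodes $t=kh$ to obtain the $c_{k}$-coefficients, take forward differences for the $d_{k}$-coefficients, and read off the upper-triangular Toeplitz structure. The only cosmetic difference is that the paper carries out the computation in detail for $i=0$ and then invokes the shift to the remaining $S_{i}$, whereas you treat general $i$ from the start; the content is identical.
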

\begin{proof}
The fractional integral of order $\alpha $ of $S_{0} \left(t\right)$ is
\begin{equation} \label{GrindEQ__14_} 
J^{\alpha } S_{0} \left(t\right)=\frac{1}{\Gamma \left(\alpha \right)} \int _{0}^{t}\left(t-\tau \right)^{\alpha -1} S_{0} \left(\tau \right)d\tau  =\left\{\begin{array}{cc} {0,} & {{\rm for\; }t=0, j=0}, \\ {\frac{h^{\alpha } }{\Gamma \left(\alpha +1\right)} \left(j^{\alpha } -\left(j-1\right)^{\alpha } \right),} & {{\rm for\; }t>0,{\rm \; }j>0.} \end{array}\right.  
\end{equation} 
\end{proof}
Evaluating the expression in \eqref{GrindEQ__14_} at $j=1,2,\ldots ,m-1$ yields the following coefficients.
\begin{equation} \label{GrindEQ__15_} 
c_{0} =0, c_{1} =\frac{h^{\alpha } }{\Gamma \left(\alpha +1\right)} , c_{2} =\frac{h^{\alpha } }{\Gamma \left(\alpha +1\right)} \left(2^{\alpha } -1^{\alpha } \right),            
\end{equation} 
\begin{equation} \label{GrindEQ__16_} 
c_{j} =\frac{h^{\alpha } }{\Gamma \left(\alpha +1\right)} \left(j^{\alpha } -\left(j-1\right)^{\alpha } \right), j=3,4,\ldots ,m-1.            
\end{equation} 
The difference between the consecutive coefficients,
\begin{equation} \label{GrindEQ__17_} 
d_{0} =c_{1} -c_{0} =\frac{h^{\alpha } }{\Gamma \left(\alpha +1\right)} , d_{1} =c_{2} -c_{1} =\frac{h^{\alpha } }{\Gamma \left(\alpha +1\right)} \left(2^{\alpha } -1^{\alpha } \right)-\frac{h^{\alpha } }{\Gamma \left(\alpha +1\right)} ,         
\end{equation} 
\begin{equation} \label{GrindEQ__18_} 
d_{2} =c_{3} -c_{2} =\frac{h^{\alpha } }{\Gamma \left(\alpha +1\right)} \left(3^{\alpha } -2^{\alpha } \right)-\frac{h^{\alpha } }{\Gamma \left(\alpha +1\right)} \left(2^{\alpha } -1^{\alpha } \right),       
\end{equation} 
\begin{equation} \label{GrindEQ__19_} 
d_{j} =c_{j+1} -c_{j} =\frac{h^{\alpha } }{\Gamma \left(\alpha +1\right)} \left(\left(j+1\right)^{\alpha } -2j^{\alpha } +\left(j-1\right)^{\alpha } \right), j=3,4,\ldots \ldots ,m-1.         
\end{equation} 
We can approximate $J^{\alpha } S_{0} \left(t\right)$ in terms of TFs,
\begin{equation} \label{GrindEQ__20_} 
J^{\alpha } S_{0} \left(t\right)=\left[\begin{array}{cccc} {c_{0} } & {c_{1} } {\cdots } & {c_{m-1} } \end{array}\right]S_{\left(m\right)} \left(t\right)+\left[\begin{array}{cccc} {d_{0} } & {d_{1} } & {\cdots } & {d_{m-1} } \end{array}\right]T_{\left(m\right)} \left(t\right).       
\end{equation} 
We can approximate $J^{\alpha } S_{0} \left(t\right)$ in terms of TFs,
\begin{equation} \label{GrindEQ__20_} 
J^{\alpha } S_{0} \left(t\right)=\left[\begin{array}{cccc} {c_{0} } & {c_{1} } {\cdots } & {c_{m-1} } \end{array}\right]S_{\left(m\right)} \left(t\right)+\left[\begin{array}{cccc} {d_{0} } & {d_{1} } & {\cdots } & {d_{m-1} } \end{array}\right]T_{\left(m\right)} \left(t\right).       
\end{equation}
Substituting the expressions for $c_{i} $ and $d_{i} $ in \eqref{GrindEQ__20_},
\begin{equation} \label{GrindEQ__21_} 
J^{\alpha} S_{0} \left(t\right)=\frac{h^{\alpha } }{\Gamma \left(\alpha +1\right)} A1 S_{\left(m\right)} \left(t\right)+ {\frac{h^{\alpha } }{\Gamma \left(\alpha +1\right)} B1 T_{\left(m\right)} \left(t\right)},
\end{equation} 
where $A1=\left[\begin{array}{ccccccc} {0} & {1} & {\left(2^{\alpha } -1\right)} & {\cdots } & {\left(j^{\alpha } -\left(j-1\right)^{\alpha } \right)} & {\cdots } & {\left(\left(m-1\right)^{\alpha } -\left(m-2\right)^{\alpha } \right)} \end{array}\right]$, \\
$B1=\left[\begin{array}{cccccc} {1} & {\left(2^{\alpha } -2\right)} & {\cdots } & {\left(\left(j+1\right)^{\alpha } -2j^{\alpha } +\left(j-1\right)^{\alpha } \right)} & {\cdots } & {B01} \end{array}\right]$,\\
$B10=\left(\left(m\right)^{\alpha } -2\left(m-1\right)^{\alpha } +\left(m-2\right)^{\alpha } \right)$.\\
Rewriting \eqref{GrindEQ__21_},
\begin{equation}\label{GrindEQ__22_}
J^{\alpha } S_{0} \left(t\right)=\frac{h^{\alpha } }{\Gamma \left(\alpha +1\right)} A2 S_{\left(m\right)} \left(t\right)+\frac{h^{\alpha } }{\Gamma \left(\alpha +1\right)} B2 T_{\left(m\right)} \left(t\right),
\end{equation} 
where \\
$A2=\left[\begin{array}{cccccc} {0} & {\varsigma _{1} } & {\varsigma _{2} } & {\varsigma _{3} } & {\cdots } & {\varsigma _{m-1} } \end{array}\right]$, $B2=\left[\begin{array}{cccccc} {1} & {\xi _{1} } & {\xi _{2} } & {\xi _{3} } & {\cdots } & {\xi _{m-1} } \end{array}\right]$, $\varsigma _{k} =\left(k^{\alpha } -\left(k-1\right)^{\alpha } \right)$,\\
 $\xi _{k} =\left(k+1\right)^{\alpha } -2k^{\alpha } +\left(k-1\right)^{\alpha } $, $k\in \left[1,m-1\right]$.\\
 Carrying out fractional integration on the remaining terms and expressing the results via orthogonal TFs,
\begin{equation}\label{GrindEQ__23_}
J^{\alpha } S_{1} \left(t\right)=\frac{h^{\alpha } }{\Gamma \left(\alpha +1\right)} A3 S_{\left(m\right)} \left(t\right)+\frac{h^{\alpha } }{\Gamma \left(\alpha +1\right)} B3 T_{\left(m\right)} \left(t\right), 
\end{equation}
 where $A3=\left[\begin{array}{ccccc} {0} & {0} & {\varsigma _{1} } & {\cdots } & {\varsigma _{m-2} } \end{array}\right]$, $B3=\left[\begin{array}{ccccc} {0} & {1} & {\xi _{1} } & {\cdots } & {\xi _{m-2} } \end{array}\right]$. \\
 \begin{equation*}\begin{array}{l} {\vdots } \\ {\vdots } \end{array}\end{equation*}
\begin{equation} \label{GrindEQ__24_} 
J^{\alpha } S_{m-2} \left(t\right)=\frac{h^{\alpha } }{\Gamma \left(\alpha +1\right)} A4 S_{\left(m\right)} \left(t\right)+\frac{h^{\alpha } }{\Gamma \left(\alpha +1\right)} B4 T_{\left(m\right)} \left(t\right),     
\end{equation} 
where $A4=\left[\begin{array}{ccccc} {0} & {0} & {\cdots } & {0} & {\varsigma _{1} } \end{array}\right]$, $B4=\left[\begin{array}{ccccc} {0} & {\cdots } & {0} & {1} & {\xi _{1} } \end{array}\right]$.
\begin{equation} \label{GrindEQ__25_} 
J^{\alpha } S_{m-1} \left(t\right)=\frac{h^{\alpha } }{\Gamma \left(\alpha +1\right)} A5 S_{\left(m\right)} \left(t\right)+\frac{h^{\alpha } }{\Gamma \left(\alpha +1\right)} B5 T_{\left(m\right)} \left(t\right),    
\end{equation} 
where $A5=\left[\begin{array}{cccccc} {0} & {0} & {\cdots } & {\cdots } & {0} & {0} \end{array}\right]$, $B5=\left[\begin{array}{cccccc} {0} & {\cdots } & {\cdots } & {0} & {0} & {1} \end{array}\right]$.\\
Therefore,
\begin{equation} \label{GrindEQ__26_} 
J^{\alpha } S_{\left(m\right)} \left(t\right)=P_{\alpha ss\left(m\right)} S_{\left(m\right)} \left(t\right)+P_{\alpha st\left(m\right)} T_{\left(m\right)} \left(t\right),             
\end{equation} 
where 
$P_{\alpha ss\left(m\right)} =\frac{h^{\alpha } }{\Gamma \left(\alpha +1\right)} A6$ , $P_{\alpha st\left(m\right)} =\frac{h^{\alpha } }{\Gamma \left(\alpha +1\right)} B6$, \\
$A6=\left[\left[\left. \left. \begin{array}{cccccc} {0} & {\varsigma _{1} } & {\varsigma _{2} } & {\varsigma _{3} } & {\cdots } & {\varsigma _{m-1} } \end{array}\right]\right]\right. \right.$, $B6=\left[\left[\left. \left. \begin{array}{cccccc} {1} & {\xi _{1} } & {\xi _{2} } & {\xi _{3} } & {\cdots } & {\xi _{m-1} } \end{array}\right]\right]\right. \right. $.\\
This proves Theorem \ref{thm1}.
\begin{corollary}\label{corr1}
 If $\alpha =1$, the generalized one-shot operational matrices; $P_{\alpha ss\left(m\right)} $, $P_{\alpha st\left(m\right)} $ in \eqref{GrindEQ__26_} become the one-shot operational matrices, $P_{1ss\left(m\right)} $, $P_{1st\left(m\right)} $, for first order integration of $S_{\left(m\right)} \left(t\right)$.
\end{corollary}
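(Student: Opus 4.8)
The plan is purely computational: specialize the data defining $P_{\alpha ss(m)}$ and $P_{\alpha st(m)}$ in \eqref{GrindEQ__26_} to $\alpha=1$ and check that every piece collapses to the corresponding piece of \eqref{GrindEQ__7_}. There is no genuine obstacle here; the whole argument is an elementary substitution, and the only point to keep track of is that the banded upper-triangular Toeplitz construction $[[\,\cdot\,]]$ depends linearly on its generating row, so it suffices to match the generating rows and then invoke that the same rule builds the full $m\times m$ matrix in both the fractional and the integer-order case.

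First I would evaluate the scalar prefactor: since $\Gamma(\alpha+1)=\Gamma(2)=1$, we have $\dfrac{h^{\alpha}}{\Gamma(\alpha+1)}\Big|_{\alpha=1}=h$. Next I would evaluate the two sequences appearing in Theorem~\ref{thm1}: for every $k\in[1,m-1]$,
\begin{equation*}
\varsigma_{k}\big|_{\alpha=1}=k^{1}-(k-1)^{1}=1,\qquad
\xi_{k}\big|_{\alpha=1}=(k+1)-2k+(k-1)=0 .
\end{equation*}
Hence the generating row of $P_{\alpha ss(m)}$ becomes $h\,[\,0\ 1\ 1\ \cdots\ 1\,]$ and the generating row of $P_{\alpha st(m)}$ becomes $h\,[\,1\ 0\ 0\ \cdots\ 0\,]$, which are exactly the generating rows of $P_{1ss(m)}$ and $P_{1st(m)}$ recorded after \eqref{GrindEQ__7_}. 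Applying $[[\,\cdot\,]]$ to these rows therefore yields $P_{\alpha ss(m)}\big|_{\alpha=1}=P_{1ss(m)}$ and $P_{\alpha st(m)}\big|_{\alpha=1}=P_{1st(m)}$. Since the shifted generating rows $A3,\dots,A6$ and $B3,\dots,B6$ are obtained from $A2,B2$ by the same index shift in both cases, nothing beyond the first row needs to be checked separately.

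As a consistency check I would also re-derive the $\alpha=1$ case directly: $J^{1}S_{0}(t)=\int_{0}^{t}S_{0}(\tau)\,d\tau$ equals the ramp $t$ on $[0,h)$ and the constant $h$ on $[jh,(j+1)h)$ for $j\ge 1$, so its HF coefficients are $c_{0}=0$, $c_{j}=h$ for $j\ge1$, and $d_{0}=h$, $d_{j}=0$ for $j\ge1$. This reproduces the first rows of $h\,[[\,0\ 1\ \cdots\ 1\,]]$ and $h\,[[\,1\ 0\ \cdots\ 0\,]]$, confirming the reduction and, in passing, showing that the special low-index coefficients in \eqref{GrindEQ__15_}--\eqref{GrindEQ__19_} also collapse to the general pattern when $\alpha=1$. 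This completes the plan for Corollary~\ref{corr1}.
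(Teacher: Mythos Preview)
Your proposal is correct: the paper states Corollary~\ref{corr1} without proof, treating it as an immediate consequence of Theorem~\ref{thm1}, and your direct substitution $\alpha=1$ (giving $h^{\alpha}/\Gamma(\alpha+1)=h$, $\varsigma_{k}=1$, $\xi_{k}=0$) is exactly the verification the paper leaves implicit. The consistency check via $J^{1}S_{0}(t)$ is a nice touch but not needed.
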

\begin{theorem}\label{thm2}
The HF estimates for the fractional integral of order $\alpha $ of the set of piecewise linear right-handed triangular functions, $T_{\left(m\right)} \left(t\right)$, is
\begin{equation} \label{GrindEQ__27_} 
J^{\alpha } T_{\left(m\right)} \left(t\right)=\frac{1}{\Gamma \left(\alpha \right)} \int _{0}^{t}\left(t-\tau \right)^{\alpha -1} T_{\left(m\right)} \left(\tau \right)d\tau  =P_{\alpha ts\left(m\right)} S_{\left(m\right)} \left(t\right)+P_{\alpha tt\left(m\right)} T_{\left(m\right)} \left(t\right),          
\end{equation} 
where $P_{\alpha ts\left(m\right)} =\frac{h^{\alpha } }{\Gamma \left(\alpha +2\right)} \left[\left[\left. \left. \begin{array}{cccccc} {0} & {\phi _{1} } & {\phi _{2} } & {\phi _{3} } & {\cdots } & {\phi _{m-1} } \end{array}\right]\right]\right. \right. $, \\
$\phi _{k} =k^{\alpha +1} -\left(k-1\right)^{\alpha } \left(k+\alpha \right)$, $k\in \left[1,m-1\right]$, \\
$P_{\alpha tt\left(m\right)} =\frac{h^{\alpha } }{\Gamma \left(\alpha +2\right)} \left[\left[\left. \left. \begin{array}{cccccc} {1} & {\psi _{1} } & {\psi _{2} } & {\psi _{3} } & {\cdots } & {\psi _{m-1} } \end{array}\right]\right]\right. \right.$ , \\
$\psi _{k} =\left(k+1\right)^{\alpha +1} -\left(k+1+\alpha \right)k^{\alpha } -k^{\alpha +1} +\left(k+\alpha \right)\left(k-1\right)^{\alpha } , k\in \left[1,m-1\right]. $
\end{theorem}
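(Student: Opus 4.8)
The plan is to mimic exactly the argument used for Theorem~\ref{thm1}: compute the $\alpha$-order Riemann--Liouville integral of the generating triangular function $T_{0}(t)$ in closed form at the grid points $t=jh$, read off the HF sample-coefficients $c_{j}$ and increments $d_{j}$ from \eqref{GrindEQ__6_}, and then use the translation structure of the family $\{T_{i}\}$ to obtain all the remaining rows of the two operational matrices.

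First I would compute $J^{\alpha}T_{0}(jh)$ for $j\ge 1$. Since $T_{0}$ is supported on $[0,h)$ with $T_{0}(\tau)=\tau/h$ there, the integral reduces to $\dfrac{1}{h\,\Gamma(\alpha)}\int_{0}^{h}(jh-\tau)^{\alpha-1}\tau\,d\tau$, and the substitution $u=jh-\tau$ converts this into $\dfrac{1}{h\,\Gamma(\alpha)}\int_{(j-1)h}^{jh}u^{\alpha-1}(jh-u)\,du$, a difference of two elementary power integrals. Evaluating them gives
\[
J^{\alpha}T_{0}(jh)=\frac{h^{\alpha}}{\Gamma(\alpha)}\left[\frac{j}{\alpha}\bigl(j^{\alpha}-(j-1)^{\alpha}\bigr)-\frac{1}{\alpha+1}\bigl(j^{\alpha+1}-(j-1)^{\alpha+1}\bigr)\right].
\]
Using $\Gamma(\alpha+2)=\alpha(\alpha+1)\Gamma(\alpha)$ and collecting terms, the bracket collapses to $\phi_{j}=j^{\alpha+1}-(j-1)^{\alpha}(j+\alpha)$, so $c_{j}:=J^{\alpha}T_{0}(jh)=\dfrac{h^{\alpha}}{\Gamma(\alpha+2)}\phi_{j}$ for $j\ge 1$, while $c_{0}=J^{\alpha}T_{0}(0)=0$. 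The increments are then $d_{0}=c_{1}-c_{0}=\dfrac{h^{\alpha}}{\Gamma(\alpha+2)}$ and $d_{j}=c_{j+1}-c_{j}=\dfrac{h^{\alpha}}{\Gamma(\alpha+2)}\psi_{j}$ with $\psi_{j}=\phi_{j+1}-\phi_{j}$, which expands to the stated closed form. This produces the HF expansion of $J^{\alpha}T_{0}(t)$ via \eqref{GrindEQ__6_}.

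Next I would handle $T_{i}$ for $1\le i\le m-1$ by a shift argument. Because $T_{i}(\tau)=T_{0}(\tau-ih)$ on its support $[ih,(i+1)h)$ and vanishes elsewhere, the change of variable $\sigma=\tau-ih$ gives $J^{\alpha}T_{i}(jh)=J^{\alpha}T_{0}\bigl((j-i)h\bigr)$ at every node $jh$; this is $0$ for $j\le i$ and equals $c_{j-i}$ for $j>i$. Hence the HF sample-coefficient vector of $J^{\alpha}T_{i}$ is the vector $(c_{0},c_{1},\dots)$ shifted $i$ places to the right with leading zeros, and likewise the increment vector is $(d_{0},d_{1},\dots)$ shifted $i$ places. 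Stacking these rows for $i=0,1,\dots,m-1$ yields exactly the $m\times m$ upper-triangular Toeplitz matrices $P_{\alpha ts(m)}$ and $P_{\alpha tt(m)}$ of Definition~\ref{def4} with first rows $\frac{h^{\alpha}}{\Gamma(\alpha+2)}(0,\phi_{1},\dots,\phi_{m-1})$ and $\frac{h^{\alpha}}{\Gamma(\alpha+2)}(1,\psi_{1},\dots,\psi_{m-1})$ respectively, which is precisely \eqref{GrindEQ__27_}. Setting $\alpha=1$ gives $\phi_{k}\equiv 1$ and $\psi_{k}\equiv 0$, recovering $P_{1ts(m)}$, $P_{1tt(m)}$ of \eqref{GrindEQ__7_}, a useful consistency check.

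The routine-but-delicate part is the algebra in the second paragraph: performing the $u$-substitution cleanly, splitting into $\int u^{\alpha-1}\,du$ and $\int u^{\alpha}\,du$, and then reducing $\frac{j}{\alpha}\bigl(j^{\alpha}-(j-1)^{\alpha}\bigr)-\frac{1}{\alpha+1}\bigl(j^{\alpha+1}-(j-1)^{\alpha+1}\bigr)$ to $\frac{1}{\alpha(\alpha+1)}\bigl(j^{\alpha+1}-(j-1)^{\alpha}(j+\alpha)\bigr)$ using the Gamma identity; one must also keep in mind that the boundary value at $t=0$ forces $c_{0}=0$, so that $d_{0}$ (not $\psi_{0}$) supplies the leading diagonal entry $1$ of $P_{\alpha tt(m)}$. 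Everything else is bookkeeping with the Toeplitz template and the disjoint-support relations \eqref{GrindEQ__9_}--\eqref{GrindEQ__10_}.
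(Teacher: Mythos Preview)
Your proposal is correct and follows essentially the same route as the paper: compute $J^{\alpha}T_{0}$ at the nodes $t=jh$, read off the HF samples $c_{j}$ and increments $d_{j}$, and then propagate to the remaining $T_{i}$ to build the Toeplitz operational matrices. The only differences are presentational---you carry out the $u$-substitution and the $\Gamma(\alpha+2)=\alpha(\alpha+1)\Gamma(\alpha)$ simplification explicitly where the paper simply states \eqref{GrindEQ__28_}, and you justify the rows for $i\ge 1$ by the translation identity $J^{\alpha}T_{i}(jh)=J^{\alpha}T_{0}\bigl((j-i)h\bigr)$ rather than invoking ``the same procedure''; neither change alters the argument.
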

\begin{proof}
We get the following expression upon performing fractional integration on $T_{0} \left(t\right)$, 
\begin{equation} \label{GrindEQ__28_} 
J^{\alpha } T_{0} \left(t\right)=\frac{1}{\Gamma \left(\alpha \right)} \int _{0}^{t}\left(t-\tau \right)^{\alpha -1} T_{0} \left(\tau \right)d\tau=\left\{\begin{array}{cc} {0,} & {{\rm for\; }t=0,} \\ {B02,} & {{\rm for\; }t>0,{\rm \; }j\in \left[1,m-1\right].} \end{array}\right.  
\end{equation} 
where $B02={\frac{h^{\alpha } }{\Gamma \left(\alpha +2\right)} \left(j^{\alpha +1} -\left(j-1\right)^{\alpha } \left(j+\alpha \right)\right)}$.\\
In the orthogonal HF domain, $J^{\alpha } T_{0} \left(t\right)$ is expressed as
\begin{equation} \label{GrindEQ__29_} 
J^{\alpha } T_{0} \left(t\right)=\left[\begin{array}{cccc} {c_{0} } & {c_{1} } & {\cdots } & {c_{m-1} } \end{array}\right]S_{\left(m\right)} \left(t\right)+\left[\begin{array}{cccc} {d_{0} } & {d_{1} } & {\cdots } & {d_{m-1} } \end{array}\right]T_{\left(m\right)} \left(t\right),     
\end{equation} 
where 
$c_{j} =\frac{h^{\alpha } }{\Gamma \left(\alpha +2\right)} \left(j^{\alpha +1} -\left(j-1\right)^{\alpha } \left(j+\alpha \right)\right)$, $j=1,2,3,4,\ldots ,m-1$,\\
$d_{j} =c_{j+1} -c_{j} =\frac{h^{\alpha } }{\Gamma \left(\alpha +2\right)} \left(\left(j+1\right)^{\alpha +1} -\left(j+1+\alpha \right)i^{\alpha } -i^{\alpha +1} +\left(j+\alpha \right)\left(j-1\right)^{\alpha } \right)$.\\
Using the expressions for $c_{j} $ and $d_{j} $ and rearranging, 
\begin{equation} \label{GrindEQ__30_} 
J^{\alpha } T_{0} \left(t\right)=\Psi A7 S_{\left(m\right)}\left(t\right) + \Psi B7 T_{\left(m\right)} \left(t\right), 
\end{equation}
where $A7=\left[\begin{array}{cccccc} {0} & {1} & {\cdots } & {\left(j^{\alpha +1} -\left(j-1\right)^{\alpha } \left(j+\alpha \right)\right)} & {\cdots } & {\Omega _{1} } \end{array}\right]$, $\Psi =\frac{h^{\alpha } }{\Gamma \left(\alpha +2\right)} $,\\
 $B7=\left[\begin{array}{ccccc} {1} & {\cdots } & {\left(j+1\right)^{\alpha +1} -\left(j+1+\alpha \right)i^{\alpha } -i^{\alpha +1} +\left(j+\alpha \right)\left(j-1\right)^{\alpha } } & {\cdots } & {\Omega _{2} } \end{array}\right]$,\\
  $\Omega _{1} =\left(\left(m-1\right)^{\alpha +1} -\left(m-2\right)^{\alpha } \left(m-1+\alpha \right)\right)$.\\
$
\Omega _{2} =m^{\alpha +1} -\left(m+\alpha \right)\left(m-1\right)^{\alpha } -\left(m-1\right)^{\alpha +1} +\left(m-1+\alpha \right)\left(m-2\right)^{\alpha } .          
$\\
We now rewrite \eqref{GrindEQ__30_},
\begin{equation}\label{GrindEQ__31_}
J^{\alpha } T_{0} \left(t\right)=\frac{h^{\alpha } }{\Gamma \left(\alpha +2\right)} A8 S_{\left(m\right)} \left(t\right)+\frac{h^{\alpha } }{\Gamma \left(\alpha +2\right)} B8 T_{\left(m\right)} \left(t\right)
\end{equation} 
where $A8=\left[\begin{array}{cccc} {0} & {\phi _{1} } & {\cdots } & {\phi _{m-1} } \end{array}\right]$,
$B8=\left[\begin{array}{cccc} {1} & {\psi _{1} } & {\cdots } & {\psi _{m-1} } \end{array}\right]$, \\
$\phi _{k} =k^{\alpha +1} -\left(k-1\right)^{\alpha } \left(k+\alpha \right)$, \\
$\psi _{k} =\left(k+1\right)^{\alpha +1} -\left(k+1+\alpha \right)k^{\alpha } -k^{\alpha +1} +\left(k+\alpha \right)\left(k-1\right)^{\alpha } $.\\
Following the same procedure, the remaining components of $T_{\left(m\right)} \left(t\right)$ can be fractional integrated and the resulting expressions can be approximated via HFs.
The fractional integral of order $\alpha $ of $T_{\left(m\right)} \left(t\right)$ in HFs domain is
\begin{equation} \label{GrindEQ__32_} 
J^{\alpha } T_{\left(m\right)} \left(t\right)=P_{\alpha ts\left(m\right)} S_{\left(m\right)} \left(t\right)+P_{\alpha tt\left(m\right)} T_{\left(m\right)} \left(t\right),              
\end{equation} 
where
$P_{\alpha ts\left(m\right)} =\frac{h^{\alpha } }{\Gamma \left(\alpha +2\right)} \left[\left[\left. \left. \begin{array}{cccccc} {0} & {\phi _{1} } & {\phi _{2} } & {\phi _{3} } & {\cdots } & {\phi _{m-1} } \end{array}\right]\right]\right. \right.$, \\
$ P_{\alpha tt\left(m\right)} =\frac{h^{\alpha } }{\Gamma \left(\alpha +2\right)} \left[\left[\left. \left. \begin{array}{cccccc} {1} & {\psi _{1} } & {\psi _{2} } & {\psi _{3} } & {\cdots } & {\psi _{m-1} } \end{array}\right]\right]\right. \right.$.\\
This completes the proof.
\end{proof}
\begin{corollary}\label{corr2}
 The complementary pair of one-shot operational matrices; $P_{1ts\left(m\right)} $ and $P_{1tt\left(m\right)} $ for first order integration of $T_{\left(m\right)} \left(t\right)$ in the orthogonal TFs domain can be recovered from the generalized one-shot operational matrices; $P_{\alpha ts\left(m\right)} $ and $P_{\alpha tt\left(m\right)} $ by using $\alpha =1$.
\end{corollary}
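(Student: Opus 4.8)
The plan is to obtain Corollary~\ref{corr2} by direct specialization: set $\alpha = 1$ in the closed-form expressions for $P_{\alpha ts\left(m\right)}$ and $P_{\alpha tt\left(m\right)}$ furnished by Theorem~\ref{thm2}, and verify that the resulting matrices coincide with the one-shot operational matrices $P_{1ts\left(m\right)}$ and $P_{1tt\left(m\right)}$ recorded in \eqref{GrindEQ__7_}. First I would observe that at $\alpha = 1$ the common prefactor becomes $\dfrac{h^{\alpha}}{\Gamma\left(\alpha+2\right)} = \dfrac{h}{\Gamma\left(3\right)} = \dfrac{h}{2}$, which already matches the scalar multiplying $P_{1ts\left(m\right)}$ and $P_{1tt\left(m\right)}$ in \eqref{GrindEQ__7_}.

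Next I would simplify the upper-triangular Toeplitz generators entrywise. For $P_{\alpha ts\left(m\right)}$ the generic entry is $\phi_{k} = k^{\alpha+1} - \left(k-1\right)^{\alpha}\left(k+\alpha\right)$; putting $\alpha = 1$ gives $\phi_{k} = k^{2} - \left(k-1\right)\left(k+1\right) = k^{2} - \left(k^{2}-1\right) = 1$ for every $k \in \left[1,m-1\right]$, so the generator row collapses to $[\,0\ 1\ 1\ \cdots\ 1\,]$ and hence $P_{1ts\left(m\right)} = \tfrac{h}{2}[[\,0\ 1\ 1\ \cdots\ 1\,]]$, exactly as in \eqref{GrindEQ__7_}. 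For $P_{\alpha tt\left(m\right)}$ the generic entry is $\psi_{k} = \left(k+1\right)^{\alpha+1} - \left(k+1+\alpha\right)k^{\alpha} - k^{\alpha+1} + \left(k+\alpha\right)\left(k-1\right)^{\alpha}$; at $\alpha = 1$ this becomes $\left(k+1\right)^{2} - \left(k+2\right)k - k^{2} + \left(k+1\right)\left(k-1\right) = \left(k^{2}+2k+1\right) - \left(k^{2}+2k\right) - k^{2} + \left(k^{2}-1\right) = 0$ for every $k \in \left[1,m-1\right]$, while the leading entry is the constant $1$; thus the generator row reduces to $[\,1\ 0\ 0\ \cdots\ 0\,]$ and $P_{1tt\left(m\right)} = \tfrac{h}{2}[[\,1\ 0\ 0\ \cdots\ 0\,]]$, again matching \eqref{GrindEQ__7_}.

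A cleaner but equivalent alternative is to revisit the derivation inside the proof of Theorem~\ref{thm2}: replacing $\alpha$ by $1$ turns the kernel $\left(t-\tau\right)^{\alpha-1}$ into $1$, so $J^{\alpha}T_{\left(m\right)}\left(t\right)$ becomes the ordinary antiderivative $\int_{0}^{t} T_{\left(m\right)}\left(\tau\right)\,d\tau$, and the coefficient bookkeeping carried out there collapses step by step into the one producing \eqref{GrindEQ__7_}. This shows the specialization is consistent with the \emph{definition} of the first-order one-shot matrices, not merely a numerical coincidence.

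There is essentially no serious obstacle here --- the statement is a consistency check --- so the only thing requiring care is the algebraic bookkeeping: correctly expanding $\left(k-1\right)^{\alpha}\left(k+\alpha\right)$ and the four-term expression for $\psi_{k}$ at $\alpha = 1$, and handling the boundary entries (the $0$ in the first slot of the $P_{ts}$ generator, the $1$ in the first slot of the $P_{tt}$ generator, and the $k=1$ values $\phi_{1},\psi_{1}$) so that the upper-triangular Toeplitz structure $[[\cdots]]$ is reproduced exactly. One should also check that the index range $k \in \left[1,m-1\right]$ in Theorem~\ref{thm2} lines up with the lengths of the generator rows appearing in \eqref{GrindEQ__7_}.
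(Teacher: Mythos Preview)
Your proposal is correct and follows exactly the natural route the paper implicitly intends: the paper states Corollary~\ref{corr2} without proof, treating it as an immediate consequence of Theorem~\ref{thm2} obtained by substituting $\alpha=1$ into $\phi_k$, $\psi_k$ and the prefactor $h^{\alpha}/\Gamma(\alpha+2)$. Your explicit verification that $\phi_k=1$, $\psi_k=0$, and the prefactor becomes $h/2$ is precisely the computation that justifies this, and there is nothing to add.
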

\begin{theorem}\label{thm3}
    The formula for approximating the Riemann-Liouville fractional integral of order $\alpha $ of $f\left(t\right)$ by orthogonal TFs is
\begin{equation} \label{GrindEQ__33_} 
J^{\alpha}f\left(t\right) \approx \left(C_{S}^{T} P_{\alpha ss\left(m\right)} +C_{T}^{T} P_{\alpha ts\left(m\right)} \right) S_{\left(m\right)} \left(t\right)+\left(C_{S}^{T} P_{\alpha st\left(m\right)} +C_{T}^{T} P_{\alpha tt\left(m\right)} \right)T_{\left(m\right)} \left(t\right).       
\end{equation}     
\end{theorem}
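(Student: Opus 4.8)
The plan is to combine the HF expansion of $f(t)$ from Definition \ref{def3} with the linearity of the Riemann--Liouville fractional integral operator $J^{\alpha}$ and then substitute the operational-matrix identities from Theorems \ref{thm1} and \ref{thm2}. Concretely, I would start from \eqref{GrindEQ__6_}, namely $f(t)\approx C_{S}^{T}S_{(m)}(t)+C_{T}^{T}T_{(m)}(t)$, and apply $J^{\alpha}$ to both sides. Since $J^{\alpha}$ is a linear integral operator (its kernel $\frac{1}{\Gamma(\alpha)}(t-\tau)^{\alpha-1}$ does not depend on the integrand), it distributes over the finite sum and over the constant row vectors $C_{S}^{T}$ and $C_{T}^{T}$, giving $J^{\alpha}f(t)\approx C_{S}^{T}\,J^{\alpha}S_{(m)}(t)+C_{T}^{T}\,J^{\alpha}T_{(m)}(t)$, where $J^{\alpha}$ is understood to act componentwise on the vectors $S_{(m)}(t)$ and $T_{(m)}(t)$.

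Next I would invoke the two previous theorems directly. By Theorem \ref{thm1}, $J^{\alpha}S_{(m)}(t)=P_{\alpha ss(m)}S_{(m)}(t)+P_{\alpha st(m)}T_{(m)}(t)$, and by Theorem \ref{thm2}, $J^{\alpha}T_{(m)}(t)=P_{\alpha ts(m)}S_{(m)}(t)+P_{\alpha tt(m)}T_{(m)}(t)$. Substituting these into the expression above yields
\begin{equation*}
J^{\alpha}f(t)\approx C_{S}^{T}\bigl(P_{\alpha ss(m)}S_{(m)}(t)+P_{\alpha st(m)}T_{(m)}(t)\bigr)+C_{T}^{T}\bigl(P_{\alpha ts(m)}S_{(m)}(t)+P_{\alpha tt(m)}T_{(m)}(t)\bigr).
\end{equation*}
Then I would regroup the terms by the basis vectors $S_{(m)}(t)$ and $T_{(m)}(t)$, collecting the coefficients of $S_{(m)}(t)$ into $C_{S}^{T}P_{\alpha ss(m)}+C_{T}^{T}P_{\alpha ts(m)}$ and the coefficients of $T_{(m)}(t)$ into $C_{S}^{T}P_{\alpha st(m)}+C_{T}^{T}P_{\alpha tt(m)}$, which is exactly \eqref{GrindEQ__33_}. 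One bookkeeping point to state carefully is that $C_{S}^{T}$ and $C_{T}^{T}$ are $1\times m$ row vectors while $P_{\alpha ss(m)}$ etc. are $m\times m$ matrices, so the products $C_{S}^{T}P_{\alpha ss(m)}$ are again $1\times m$ row vectors and the matrix-vector arithmetic is associative and conformable throughout.

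There is no deep obstacle here; the statement is essentially a corollary of Theorems \ref{thm1} and \ref{thm2} obtained by linearity. The only thing requiring a word of care is the nature of the approximation: the symbol $\approx$ already appears in \eqref{GrindEQ__6_} because $f$ need not be exactly piecewise linear, and Theorems \ref{thm1}--\ref{thm2} themselves replace $J^{\alpha}$ of the basis functions by their HF projections. So the final identity inherits two sources of approximation, and I would remark that the error is controlled by the interpolation error of $f$ on the mesh of width $h$ together with the projection error already analysed for the operational matrices; the rigorous $h\to 0$ convergence is deferred to Section \ref{sec5}. Modulo that remark, the proof is just: expand, apply linearity, substitute the two operational-matrix theorems, and collect coefficients.
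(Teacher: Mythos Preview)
Your proposal is correct and follows essentially the same route as the paper: expand $f$ via Definition~\ref{def3}, use linearity of $J^{\alpha}$ to obtain $J^{\alpha}f(t)\approx C_{S}^{T}J^{\alpha}S_{(m)}(t)+C_{T}^{T}J^{\alpha}T_{(m)}(t)$, then substitute Theorems~\ref{thm1} and~\ref{thm2} and collect coefficients. The only cosmetic difference is that the paper passes through the convolution form $J^{\alpha}f(t)=\tfrac{1}{\Gamma(\alpha)}\bigl(t^{\alpha-1}\times f(t)\bigr)$ before invoking linearity, which you bypass without loss.
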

\begin{proof}
 The Riemann-Liouville fractional order integral of $f\left(t\right)$ is
\begin{equation} \label{GrindEQ__34_} 
J^{\alpha } f\left(t\right)=\frac{1}{\Gamma \left(\alpha \right)} \int _{0}^{t}\left(t-\tau \right)^{\alpha -1} f\left(\tau \right)d\tau =\frac{1}{\Gamma \left(\alpha \right)} \left(t^{\alpha -1} \times f\left(t\right)\right) ,           
\end{equation} 
where the symbol $\times $ is the convolution operator of two functions.\\   
By means of Definition \ref{def3},
\begin{equation}\label{GrindEQ__35_}
J^{\alpha } f\left(t\right) =\frac{1}{\Gamma \left(\alpha \right)} \left(t^{\alpha -1} \times \left(C_{S}^{T} S_{\left(m\right)} \left(t\right)+C_{T}^{T} T_{\left(m\right)} \left(t\right)\right)\right)=C_{S}^{T} J^{\alpha } S_{\left(m\right)} \left(t\right)+C_{T}^{T} J^{\alpha } T_{\left(m\right)} \left(t\right).
\end{equation}
Using Theorems \ref{thm1} and \ref{thm2}, 
\begin{equation} \label{GrindEQ__36_} 
J^{\alpha } f\left(t\right) \approx \left(P_{\alpha ss\left(m\right)} C_{S}^{T} +P_{\alpha ts\left(m\right)} C_{T}^{T} \right) S_{\left(m\right)} \left(t\right)+\left(P_{\alpha st\left(m\right)} C_{S}^{T} +P_{\alpha tt\left(m\right)} C_{T}^{T} \right)T_{\left(m\right)} \left(t\right).       
\end{equation} 
Hence, Theorem \ref{thm3} is proved.
\end{proof}
\begin{corollary}\label{corr3}
 It follows from Corollary \ref{corr1} and Corollary \ref{corr2} that the above expression reduces to that in \eqref{GrindEQ__7_} for$\alpha =1$.
\end{corollary}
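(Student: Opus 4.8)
The plan is to specialize the general formula \eqref{GrindEQ__33_} of Theorem~\ref{thm3} at $\alpha=1$ and verify term by term that it collapses onto \eqref{GrindEQ__7_}. The left-hand side is immediate: setting $\alpha=1$ in the Riemann--Liouville integral \eqref{GrindEQ__34_} and using $\Gamma(1)=1$ together with $(t-\tau)^{0}=1$ gives $J^{1}f(t)=\int_{0}^{t}f(\tau)\,d\tau$, which is exactly the first-order integral on the left of \eqref{GrindEQ__7_}. Hence it only remains to match the four coefficient blocks.

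This matching is precisely the content of Corollary~\ref{corr1} and Corollary~\ref{corr2}. The former yields $P_{\alpha ss\left(m\right)}\big|_{\alpha=1}=P_{1ss\left(m\right)}$ and $P_{\alpha st\left(m\right)}\big|_{\alpha=1}=P_{1st\left(m\right)}$, and the latter yields $P_{\alpha ts\left(m\right)}\big|_{\alpha=1}=P_{1ts\left(m\right)}$ and $P_{\alpha tt\left(m\right)}\big|_{\alpha=1}=P_{1tt\left(m\right)}$. Substituting these four identities into \eqref{GrindEQ__33_}, the $S_{\left(m\right)}\left(t\right)$-block $\bigl(C_{S}^{T}P_{\alpha ss\left(m\right)}+C_{T}^{T}P_{\alpha ts\left(m\right)}\bigr)$ and the $T_{\left(m\right)}\left(t\right)$-block $\bigl(C_{S}^{T}P_{\alpha st\left(m\right)}+C_{T}^{T}P_{\alpha tt\left(m\right)}\bigr)$ turn into the corresponding blocks of \eqref{GrindEQ__7_}, establishing the claim.

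For a self-contained argument I would also verify the entry-level reductions directly, since that is where the actual content lies. At $\alpha=1$ one has $\Gamma(\alpha+1)=\Gamma(2)=1$, so the prefactor $h^{\alpha}/\Gamma(\alpha+1)$ becomes $h$; the off-diagonal entries collapse as $\varsigma_{k}=k^{\alpha}-(k-1)^{\alpha}\to 1$ and $\xi_{k}=(k+1)^{\alpha}-2k^{\alpha}+(k-1)^{\alpha}\to 0$, recovering the first rows $h(0,1,\ldots,1)$ and $h(1,0,\ldots,0)$ of $P_{1ss\left(m\right)}$ and $P_{1st\left(m\right)}$. Likewise $\Gamma(\alpha+2)=\Gamma(3)=2$, so the prefactor $h^{\alpha}/\Gamma(\alpha+2)$ becomes $h/2$; and $\phi_{k}=k^{\alpha+1}-(k-1)^{\alpha}(k+\alpha)\to k^{2}-(k-1)(k+1)=1$ while $\psi_{k}\to(k+1)^{2}-(k+2)k-k^{2}+(k+1)(k-1)=0$, recovering the first rows $\tfrac{h}{2}(0,1,\ldots,1)$ and $\tfrac{h}{2}(1,0,\ldots,0)$ of $P_{1ts\left(m\right)}$ and $P_{1tt\left(m\right)}$.

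I do not anticipate any genuine obstacle: with Theorems~\ref{thm1}--\ref{thm3} already proved, Corollary~\ref{corr3} is a pure specialization check. The only points requiring care are the two Gamma evaluations $\Gamma(2)=1$ and $\Gamma(3)=2$ and the cancellation of the power-difference expressions $\varsigma_{k},\xi_{k},\phi_{k},\psi_{k}$ at $\alpha=1$, which must be checked for each of the four blocks so that the diagonal and off-diagonal patterns of the upper-triangular Toeplitz matrices coincide exactly with those of $P_{1ss\left(m\right)},P_{1st\left(m\right)},P_{1ts\left(m\right)},P_{1tt\left(m\right)}$.
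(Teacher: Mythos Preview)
Your proposal is correct and follows the same route the paper indicates: the paper gives no separate proof of Corollary~\ref{corr3} beyond invoking Corollaries~\ref{corr1} and~\ref{corr2}, so your specialization argument together with the explicit entry-level checks of $\varsigma_{k},\xi_{k},\phi_{k},\psi_{k}$ at $\alpha=1$ is actually more detailed than what the paper provides.
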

\section{Numerical method to solve fractional order differential-algebraic equations}\label{sec4}
We consider the following fractional order differential-algebraic equations.
\begin{equation*}
{}_{0}^{C} D_{t}^{\alpha } y_{i} \left(t\right)=f_{i} \left(t,y_{1} \left(t\right), \ldots ,y_{n} \left(t\right)\right), i=1,2,\ldots, n-1, t\in \left[0,1\right],
\end{equation*}
\begin{equation}\label{GrindEQ__37_} 
0=g\left(t,y_{1} \left(t\right), \ldots ,y_{n} \left(t\right)\right), \alpha \in \left(0,1\right], ,y_{j} \left(0\right)=a_{j} , j=1,2, \ldots, n.        
\end{equation} 
Rewriting \eqref{GrindEQ__37_},
\begin{equation} \label{GrindEQ__38_} 
y_{i} \left(t\right)=y_{i} \left(0\right)+J^{\alpha } f_{i} \left(t,y_{1} \left(t\right),y_{2} \left(t\right),y_{3} \left(t\right),\ldots \ldots ,y_{n} \left(t\right)\right),            
\end{equation} 
\begin{equation} \label{GrindEQ__39_} 
0=g\left(t,y_{1} \left(t\right),y_{2} \left(t\right),y_{3} \left(t\right),\ldots \ldots ,y_{n} \left(t\right)\right).              
\end{equation} 
By utilizing Definition \ref{def3} and Equation \eqref{GrindEQ__12_}, we get
\begin{equation} \label{GrindEQ__40_} 
y_{i} \left(t\right)\approx C_{Si}^{T} S_{\left(m\right)} \left(t\right)+C_{Ti}^{T} T_{\left(m\right)} \left(t\right), y_{i} \left(0\right)=C_{S0i}^{T} S_{\left(m\right)} \left(t\right)+C_{T0i}^{T} T_{\left(m\right)} \left(t\right).        
\end{equation} 
\begin{equation} \label{GrindEQ__41_} 
f_{i} \left(t,y_{1} \left(t\right),y_{2} \left(t\right),y_{3} \left(t\right),\ldots \ldots ,y_{n} \left(t\right)\right)\approx \tilde{C}_{Si}^{T} S_{\left(m\right)} \left(t\right)+\tilde{C}_{Ti}^{T} T_{\left(m\right)} \left(t\right).         
\end{equation} 
\begin{equation} \label{GrindEQ__42_} 
g\left(t,y_{1} \left(t\right),y_{2} \left(t\right),y_{3} \left(t\right),\ldots \ldots ,y_{n} \left(t\right)\right)\approx C_{Sg}^{T} S_{\left(m\right)} \left(t\right)+C_{Tg}^{T} T_{\left(m\right)} \left(t\right).           
\end{equation} 
Equations \eqref{GrindEQ__38_} and \eqref{GrindEQ__39_} become,
\begin{equation} \label{GrindEQ__43_} 
C_{Si}^{T} S_{\left(m\right)} \left(t\right)+C_{Ti}^{T} T_{\left(m\right)} \left(t\right)=C_{S0i}^{T} S_{\left(m\right)} \left(t\right)+C_{T0i}^{T} T_{\left(m\right)} \left(t\right)+J^{\alpha } \left(\tilde{C}_{Si}^{T} S_{\left(m\right)} \left(t\right)+\tilde{C}_{Ti}^{T} T_{\left(m\right)} \left(t\right)\right),        
\end{equation} 
\begin{equation} \label{GrindEQ__44_} 
0=C_{Sg}^{T} S_{\left(m\right)} \left(t\right)+C_{Tg}^{T} T_{\left(m\right)} \left(t\right).               
\end{equation} 
Using Theorem \ref{thm3}, 
\begin{equation} \label{GrindEQ__45_} 
C_{Si}^{T} S_{\left(m\right)} \left(t\right)+C_{Ti}^{T} T_{\left(m\right)} \left(t\right)= C_{S0i}^{T} S_{\left(m\right)} \left(t\right)+C_{T0i}^{T} T_{\left(m\right)} \left(t\right)+B03 S_{\left(m\right)} \left(t\right)+ B04 T_{\left(m\right)} \left(t\right),       
\end{equation} 
\begin{equation} \label{GrindEQ__46_} 
0=C_{Sg}^{T} S_{\left(m\right)} \left(t\right)+C_{Tg}^{T} T_{\left(m\right)} \left(t\right).               
\end{equation} 
where $B03=\left(\tilde{C}_{Si}^{T} P_{\alpha ss\left(m\right)} +\tilde{C}_{Ti}^{T} P_{\alpha ts\left(m\right)} \right)$, $B04=\left(\tilde{C}_{Si}^{T} P_{\alpha st\left(m\right)} +\tilde{C}_{Ti}^{T} P_{\alpha tt\left(m\right)} \right)$.\\
Comparing the coefficients of $S_{\left(m\right)} \left(t\right)$ and $T_{\left(m\right)} \left(t\right)$, 
\begin{equation} \label{GrindEQ__47_} 
\left. \begin{array}{l} {C_{Si}^{T} =C_{S0i}^{T} +\left(\tilde{C}_{Si}^{T} P_{\alpha ss\left(m\right)} +\tilde{C}_{Ti}^{T} P_{\alpha ts\left(m\right)} \right)} \\ {C_{Ti}^{T} =C_{T0i}^{T} +\left(\tilde{C}_{Si}^{T} P_{\alpha ts\left(m\right)} +\tilde{C}_{Ti}^{T} P_{\alpha tt\left(m\right)} \right)} \\ {0=C_{Sg}^{T} } \\ {0=C_{Tg}^{T} } \end{array}\right\}.             
\end{equation} 
Solving \eqref{GrindEQ__47_} produces the HF estimate for the $i^{th} $ unknown, $y_{i} \left(t\right)$, $i=1,2,\ldots ,n$.
\section{Convergence analysis}\label{sec5}
Let $\tilde{y}_{i} \left(t\right)$ be the HF estimate for the actual solution, $y_{i} \left(t\right)$, of fractional order differential-algebraic equations in \eqref{GrindEQ__37_}.\\
The error between the approximate solution and the exact solution of \eqref{GrindEQ__37_} is defined on $j^{th} $ subinterval, $\left[jh,\left(j+1\right)h\right)$, as
\begin{equation} \label{GrindEQ__48_} 
\varepsilon _{i} \left(t\right)=\left|y_{i} \left(t\right)-\tilde{y}_{i} \left(t\right)\right|, i\in \left[1,n\right], t\in \left[jh,\left(j+1\right)h\right), j\in \left[0,m-1\right].           
\end{equation}
Using Definitions \ref{def1} and \ref{def3},
\begin{equation} \label{GrindEQ__49_} 
\begin{array}{l} {\varepsilon _{i} \left(t\right)=y_{i} \left(t\right)-\left(y_{i} \left(jh\right)+\left(y_{i} \left(\left(j+1\right)h\right)-y_{i} \left(jh\right)\right)\frac{\left(t-jh\right)}{h} \right),} \\ {{\rm \; \; \; \; \; \; \; \; \; }=y_{i} \left(t\right)-\left(y_{i} \left(jh\right)+\frac{\left(y_{i} \left(\left(j+1\right)h\right)-y_{i} \left(jh\right)\right)}{h} \left(t-jh\right)\right),} \\ {{\rm \; \; \; \; \; \; \; \; \; }=y_{i} \left(t\right)-\left(y_{i} \left(jh\right)+\left(\frac{dy_{i} \left(t\right)}{dt} \right)_{t=jh} \left(t-jh\right)\right).} \end{array} 
\end{equation} 
The Taylor series expansion of $y_{i} \left(t\right)$ with center $jh$ is
\begin{equation} \label{GrindEQ__50_} 
y_{i} \left(t\right)=y_{i} \left(jh\right)+\left(\frac{dy_{i} \left(t\right)}{dt} \right)_{t=jh} \left(t-jh\right)+\left(\frac{d^{2} y_{i} \left(t\right)}{dt^{2} } \right)_{t=jh} \frac{\left(t-jh\right)^{2} }{2!} + B05,       
\end{equation} 
where $B05=\sum _{k=3}^{\infty }\left(\frac{d^{k} y_{i} \left(t\right)}{dt^{k} } \right)_{t=jh} \frac{\left(t-jh\right)^{k} }{k!}$.\\
Considering the second order Taylor series approximation for $y_{i} \left(t\right)$ and employing it in \eqref{GrindEQ__49_},
\begin{equation} \label{GrindEQ__51_} 
\varepsilon _{i} \left(t\right)=\left(\frac{d^{2} y_{i} \left(t\right)}{dt^{2} } \right)_{t=jh} \frac{\left(t-jh\right)^{2} }{2!} =y_{i}^{''} \left(jh\right)\frac{\left(t-jh\right)^{2} }{2!}.  
\end{equation} 
Let us make the following assumption.
\begin{equation} \label{GrindEQ__52_} 
M=\max \left(\left|y_{i}^{''} \left(0\right)\right|,\left|y_{i}^{''} \left(h\right)\right|,\left|y_{i}^{''} \left(2h\right)\right|,\left|y_{i}^{''} \left(3h\right)\right|,\ldots \ldots ,\left|y_{i}^{''} \left(\left(m-1\right)h\right)\right|\right).
\end{equation} 
We now calculate $\left\| \varepsilon _{i} \left(t\right)\right\| _{1} $ on $j^{th} $ subinterval, $\left[jh,\left(j+1\right)h\right)$.
\begin{equation} \label{GrindEQ__53_} 
\left\| \varepsilon _{i} \left(t\right)\right\| _{1} =\int _{jh}^{\left(j+1\right)h}\left|\varepsilon _{i} \left(t\right)\right|dt =\int _{jh}^{\left(j+1\right)h}\left|y_{i}^{''} \left(jh\right)\right|\frac{\left(t-jh\right)^{2} }{2} dtn= B06,        
\end{equation}
where $B06=M\int _{jh}^{\left(j+1\right)h}\frac{\left(t-jh\right)^{2} }{2} dt =\frac{Mh^{3} }{6}$.\\
Let $\varepsilon _{m} \left(t\right)$ be the sum of errors, $\varepsilon _{i} \left(t\right)$.
\begin{equation} \label{GrindEQ__54_} 
\varepsilon _{m} \left(t\right)=\sum _{i=0}^{m-1}\varepsilon _{i} \left(t\right) .                
\end{equation} 
Calculating $\left\| \varepsilon _{m} \left(t\right)\right\| _{1} $,
\begin{equation*}
\left\| \varepsilon _{m} \left(t\right)\right\| _{1} =\int _{0}^{1}\left|\varepsilon _{m} \left(t\right)\right|dt=\int _{0}^{1}\left(\sum _{i=0}^{m-1}\left|\varepsilon _{i} \left(t\right)\right| \right)dt  =\sum _{i=0}^{m-1}\left(\int _{0}^{1}\left|\varepsilon _{i} \left(t\right)\right|dt \right)=B07,
\end{equation*}
where $B07=\sum _{i=0}^{m-1}\left\| \varepsilon _{i} \left(t\right)\right\| _{1} =\frac{mMh^{3} }{6}  =\frac{M}{6m^{2} }$.\\
Taking limit,
\begin{equation} \label{GrindEQ__55_} 
\begin{array}{c} {\lim } \\ {m\to \infty } \end{array}\left\| \varepsilon _{m} \left(t\right)\right\| _{1} =\begin{array}{c} {\lim } \\ {m\to \infty } \end{array}\frac{M}{6m^{2} } =0.              
\end{equation} 
Therefore, 
\begin{equation} \label{GrindEQ__56_} 
\begin{array}{c} {\lim } \\ {m\to \infty } \end{array}\varepsilon _{m} \left(t\right)=0.                
\end{equation} 
The approximate solution, $\tilde{y}_{i} \left(t\right)$, of fractional order differential-algebraic equation obtained by the proposed numerical method converges to the actual solution when sufficiently large number of subintervals are considered.
\section{Numerical examples}\label{sec6}
In this section, we shall solve linear and nonlinear differential-algebraic equations of fractional order using the numerical method devised in Section \ref{sec4}.
\begin{example}\label{ex1}
The linear fractional order differential-algebraic equations are
\begin{equation} \label{GrindEQ__57_} 
_{0}^{C} D_{t}^{0.5} x_{1} \left(t\right)+2x_{1} \left(t\right)-\frac{\Gamma \left(3.5\right)}{2} x_{2} \left(t\right)+x_{3} \left(t\right)=2t^{2.5} +\sin t, x_{1} \left(0\right)=0,           
\end{equation} 
\begin{equation} \label{GrindEQ__58_} 
{}_{0}^{C} D_{t}^{0.5} x_{2} \left(t\right)+x_{2} \left(t\right)+x_{3} \left(t\right)=\frac{2}{\Gamma \left(2.5\right)} t^{1.5} +t^{2} +\sin t, t\in \left[0,1\right], x_{2} \left(0\right)=0,         
\end{equation} 
\begin{equation} \label{GrindEQ__59_} 
0=2t^{2.5} +t^{2} -\sin t-\left(2x_{1} \left(t\right)+x_{2} \left(t\right)-x_{3} \left(t\right)\right), x_{3} \left(0\right)=0.     
\end{equation} 
The exact solution of this problem is $x_{1} \left(t\right)=t^{2.5} $, $x_{2} \left(t\right)=t^{2} $, $x_{3} \left(t\right)=\sin t$.
\end{example}
The given fractional order linear differential-algebraic system is solved and $\infty $-norm of the error between the exact solution and the piecewise linear HF approximate solution is computed for various values of $m$ and given along with the respective elapsed times in Table \ref{table1}. As proved theoretically in the preceding section, the HF solution converges to the actual solution as the step size, $h$, decreases. The method produces approximate solution with acceptable accuracy with $h=1/300$ in just 12.425064 seconds, therefore, it is pretty fast.
\begin{table}[tbp]
\caption{Maximal absolute errors for Example \ref{ex1}}
\centering
\begin{tabular}{c c c c c}
\hline
\multicolumn{1}{p{0.8cm}}{\centering $m$} & \multicolumn{1}{p{1.5cm}}{\centering $\left\| \varepsilon _{1} \right\| _{\infty } $} & \multicolumn{1}{p{1.5cm}}{\centering $\left\| \varepsilon _{2} \right\| _{\infty } $} & \multicolumn{1}{p{1.5cm}}{\centering $\left\| \varepsilon _{3} \right\| _{\infty } $}  &  \multicolumn{1}{p{1.5cm}}{\centering CPU time (s) } \\
\hline  
 10 &5.754133e-04 &7.429639e-04 &0.001673 &0.212426\\
50  &2.357914e-05 &4.136157e-05 &6.635931e-05 &0.584962\\
100 &5.929472e-06 &1.114655e-05 &1.706158e-05 &1.661539\\
150 &2.642181e-06 &5.117727e-06 &7.699577e-06 &3.334825\\
200 &1.488526e-06 &2.934228e-06 &4.375460e-06 &5.454437\\
250 &9.536572e-07 &1.902277e-06 &2.820419e-06 &8.695571\\
300 &6.627709e-07 &1.333553e-06 &1.969844e-06 &12.425064\\
\hline
\end{tabular}
\label{table1} 
\end{table} 
\begin{example}\label{ex2}
The fractional order nonlinear differential-algebraic equations are
\begin{equation} \label{GrindEQ__60_} 
{}_{0}^{C} D_{t}^{0.5} x_{1} \left(t\right)+x_{1} \left(t\right)x_{2} \left(t\right)-x_{3} \left(t\right)=\frac{6}{\Gamma \left(3.5\right)} t^{2.5} +2t^{4} +t^{7} -e^{t} -\sin t, t\in \left[0,1\right],          
\end{equation} 
\begin{equation} \label{GrindEQ__61_} 
{}_{0}^{C} D_{t}^{0.5} x_{2} \left(t\right)-\frac{\Gamma \left(5\right)}{\Gamma \left(4.5\right)} t^{0.5} x_{1} \left(t\right)+2x_{2} \left(t\right)+x_{1} \left(t\right)x_{3} \left(t\right)=\frac{2}{\Gamma \left(1.5\right)} t^{0.5} + B08,       
\end{equation} 
\begin{equation} \label{GrindEQ__62_} 
0=e^{t} +t\sin t-2t^{3} -\left(x_{1}^{2} \left(t\right)-x_{2} \left(t\right)t^{2} +x_{3} \left(t\right)\right), x_{1} \left(0\right)=0, x_{2} \left(0\right)=0, x_{3} \left(0\right)=1,       
\end{equation} 
where $B08=4t+2t^{4} +t^{3} e^{t} +t^{4} \sin t$.\\
We have the analytical solution, $x_{1} \left(t\right)=t^{3} $, $x_{2} \left(t\right)=2t+t^{4} $, $x_{3} \left(t\right)=e^{t} +t\sin t$, for the given fractional order differential-algebraic equations.
\end{example}
Table \ref{table2} presents the maximal absolute errors produced by the proposed numerical method with different values of $m$. Since the fractional differential-algebraic equations in \eqref{GrindEQ__60_} to \eqref{GrindEQ__62_} is nonlinear and a bit harder than Example \ref{ex1}, the numerical method needs little higher CPU usage than it required to solve Example \ref{ex1} yet it maintains good accuracy and takes reasonable computational time.
\begin{table}[tbp]
\caption{Performance of the proposed numerical method for Example \ref{ex2}}
\centering
\begin{tabular}{c c c c c}
\hline
\multicolumn{1}{p{1.0cm}}{\centering $m$} & \multicolumn{1}{p{1.5cm}}{\centering $\left\| \varepsilon _{1} \right\| _{\infty } $} & \multicolumn{1}{p{1.5cm}}{\centering $\left\| \varepsilon _{2} \right\| _{\infty } $} & \multicolumn{1}{p{1.5cm}}{\centering $\left\| \varepsilon _{3} \right\| _{\infty } $}  &  \multicolumn{1}{p{1.5cm}}{\centering CPU time (s) } \\
\hline  
10 &0.001372267 &0.020472485 &9.106405e-04 &0.244156\\
50 &5.500109e-05 &0.004986160 &6.182205e-05 &0.811694\\
100 &1.325811e-05 &0.002628090 &2.036216e-05 &2.402352\\
150 &5.705400e-06 &0.001795134 &1.074409e-05 &4.802019\\
200 &3.120540e-06 &0.001366376 &6.853949e-06 &8.209396\\
250 &1.948521e-06 &0.001104311 &4.846044e-06 & 12.625149\\
300 &1.323821e-06 &9.272787e-04 &3.655260e-06 &18.467165\\
\hline
\end{tabular}
\label{table2} 
\end{table} 

\begin{table}[tbp]
\caption{Error analysis for Example \ref{ex3}}
\centering
\begin{tabular}{c c c c c}
\hline
\multicolumn{1}{p{1.0cm}}{\centering $m$} & \multicolumn{1}{p{1.5cm}}{\centering $\left\| \varepsilon _{1} \right\| _{\infty } $} & \multicolumn{1}{p{1.5cm}}{\centering $\left\| \varepsilon _{2} \right\| _{\infty } $} & \multicolumn{1}{p{1.5cm}}{\centering $\left\| \varepsilon _{3} \right\| _{\infty } $}  &  \multicolumn{1}{p{1.5cm}}{\centering CPU time (s) } \\
\hline  
10 &0.0022695      &0.0123438     &3.06898e-04  &0.207223\\
50 &9.06163e-05    &4.92649e-04  &1.22631e-05  &0.524952\\
100 & 2.26527e-05 &1.23153e-04  &3.06569e-06   &1.540332\\
150 &1.00677e-05  &5.47341e-05  &1.36252e-06   &3.077976\\
200 &5.66308e-06  &3.07877e-05  &7.66417e-07   &5.316336\\
250 &3.62436e-06  &1.97040e-05   &4.90506e-07  &8.200463\\
300 &2.51690e-06   &1.36833e-05  &3.40629e-07  &11.630141\\
\hline
\end{tabular}
\label{table3} 
\end{table} 
\begin{example}\label{ex3}
Consider the following fractional order nonlinear differential-algebraic equations
\begin{equation} \label{GrindEQ__63_} 
{}_{0}^{C} D_{t}^{\alpha } x\left(t\right)-x\left(t\right)+z\left(t\right)x\left(t\right)=1, t\in \left[0,1\right],             
\end{equation} 
\begin{equation} \label{GrindEQ__64_} 
{}_{0}^{C} D_{t}^{\alpha } z\left(t\right)-y\left(t\right)+x^{2} \left(t\right)+z\left(t\right)=0, \alpha \in \left(0,1\right],
\end{equation} 
\begin{equation} \label{GrindEQ__65_} 
y\left(t\right)-x^{2} \left(t\right)=0, x\left(0\right)=1, y\left(0\right)=1, z\left(0\right)=1.             
\end{equation} 
This problem has closed form solution, $x\left(t\right)=e^{t} $, $y\left(t\right)=e^{2t} $, $z\left(t\right)=e^{-t} $, when the fractional order , $\alpha $, equals 1.
\end{example}
Table \ref{table3} presents the maximum absolute error between the HF solution and the exact solution of the integer order version of the problem in \eqref{GrindEQ__63_} to \eqref{GrindEQ__65_}. The HF solutions given in Tables \ref{table4} ($\alpha =1$) to \ref{table6} are in accordance with the solutions obtained by homotopy analysis method, Adomian decomposition method, variational iteration method in \cite{ref30} (see Example 5.3 in \cite{ref30}), fractional differential transform method in \cite{ref32} (see Example 2 in \cite{ref32}) and iterative decomposition method in \cite{ref33} (see Example 2 in \cite{ref33}). Comparing with those semi-analytical techniques, the TFs based numerical method exhibited good performance in terms of accuracy and computational speed (Tables \ref{table3} and \ref{table7}).
\begin{table}[tbp]
\caption{Absolute errors for Example \ref{ex3}$(\alpha=1m m=300)$}
\centering
\begin{tabular}{c c c c}
\hline
\multicolumn{1}{p{1.1cm}}{\centering $t$} &\multicolumn{1}{p{1.9cm}}{\centering $x\left(t\right)$}
& \multicolumn{1}{p{1.9cm}}{\centering $y\left(t\right)$} 
& \multicolumn{1}{p{1.9cm}}{\centering $z\left(t\right)$}\\
\hline
0    &9.9999999e-08  &9.9999999e-08  &1.0000000e-06\\
0.1  &8.999999e-08  &8.99999999e-07  &4.000000e-07\\
0.2  &1.999999e-07  &1.20000000e-06  &0\\
0.3  &2.999999e-07  &1.80000000e-06  &0\\
0.4  &6.000000e-07  &2.49999999e-06  &1.0000000e-06\\
0.5  &8.000000e-07  &3.29999999e-06  &0\\
0.6  &1.000000e-06  &4.60000000e-06  &0\\
0.7  &1.999999e-06  &6.1999999e-06  &9.99999999e-07\\
0.8  &2.499999e-06  &7.6999999e-06  &0\\
0.9  &1.999999e-06  &1.0500000e-05  &0\\
1    &2.499999e-06  &1.3700000e-05  &3.00000000e-07\\
\hline
\end{tabular}
\label{table4} 
\end{table} 

\begin{table}[tbp]
\caption{HF solution of Example \ref{ex3} ($\alpha=0.5, m=300$)}
\centering
\begin{tabular}{c c c c}
\hline
\multicolumn{1}{p{1.0cm}}{\centering $t$} & \multicolumn{1}{p{1.0cm}}{\centering $x(t)$} &\multicolumn{1}{p{1.5cm}}{\centering $y(t)$} &\multicolumn{1}{p{1.5cm}}{\centering $z(t)$} \\
\hline
0    &0.9999999  &1            &0.9999999 \\  
0.1  &1.4678387  &2.1545505    &0.7235289 \\  
0.2  &1.7411092  &3.0314614    &0.6437595  \\ 
0.3  &1.9927769  &3.9711600    &0.5919981  \\ 
0.4  & 2.2392505 & 5.0142429    & 0.5535905 \\  
0.5  &2.4871417  & 6.1858739    &0.5231438 \\  
0.6  &2.7401229  &7.5082735    &0.4980139 \\  
0.7  &3.0006555  &9.0039339    &0.4766936 \\  
0.8  &3.2706177  &10.696940    &0.4582380  \\ 
0.9  &3.5515825  &12.613738    &0.4420143  \\ 
1    &3.8449601  &14.7837188    &0.4275772 \\  
\hline
\end{tabular}
\label{table5} 
\end{table} 
\begin{table}[tbp]
\caption{HF solution of Example 3 ($\alpha=0.75, m=300$)}
\centering
\begin{tabular}{c c c c}
\hline
\multicolumn{1}{p{1.0cm}}{\centering $t$} & \multicolumn{1}{p{1.0cm}}{\centering $x(t)$} &\multicolumn{1}{p{1.5cm}}{\centering $y(t)$} &\multicolumn{1}{p{1.5cm}}{\centering $z(t)$} \\
\hline
0     &1                 &1                 &1             \\
0.1  &1.2187008    &1.4852318   &0.8282436\\
0.2  &1.4000240    &1.9600672   &0.7325794\\
0.3  &1.5841244    &2.5094501   &   0.6603331\\
0.4  &1.7769073    &3.1573996   &0.6021174\\
0.5  &1.9813863    &3.9258918   &0.5535994\\
0.6  &2.1997456    & 4.8388809   &0.5122823\\
0.7  &2.4338850    &5.9237964   &0.4765525\\
0.8  &2.6856270    & 7.2125927   &0.4452903\\
0.9  & 2.9568156    &8.7427588   &0.4176801\\
1     &3.2493726    & 10.558422   &0.39310661\\
\hline
\end{tabular}
\label{table6} 
\end{table} 
\begin{table}[tbp]
\caption{CPU time (in seconds) required by our method}
\centering
\begin{tabular}{c c c c c}
\hline
\multicolumn{1}{p{1.0cm}}{\centering $\alpha$} & \multicolumn{1}{p{1.5cm}}{\centering Example \ref{ex3}} & \multicolumn{1}{p{1.5cm}}{\centering Example \ref{ex4}} & \multicolumn{1}{p{1.5cm}}{\centering $\alpha$}  &  \multicolumn{1}{p{1.5cm}}{\centering Example \ref{ex5}} \\
\hline  
1      &11.630141  &11.682081   &1     &82.839503\\
0.75  &11.655617  &11.748390   &0.8   &75.263996\\
0.5    & 12.331451  &14.243483   &0.9   & 81.163712\\
\hline  
\end{tabular}
\label{table7} 
\end{table} 
\begin{table}[tbp]
\caption{Error analysis for Example \ref{ex4}}
\centering
\begin{tabular}{c c c c c}
\hline
\multicolumn{1}{p{1.0cm}}{\centering $m$ } & \multicolumn{1}{p{1.5cm}}{\centering  $\left\| \varepsilon _{1} \right\| _{\infty } $} & \multicolumn{1}{p{1.5cm}}{\centering $\left\| \varepsilon _{2} \right\| _{\infty }$ } & \multicolumn{1}{p{1.5cm}}{\centering $\left\| \varepsilon _{3} \right\| _{\infty } $}  &  \multicolumn{1}{p{1.5cm}}{\centering CPU time (s)} \\
\hline 
10     & 0.00604678222  &0.01569367613   &0.00569062735    &0.213382 \\
50     &2.3933286e-04    &6.3246700e-04    &2.3315865e-04    &0.538698 \\
100   &5.9813254e-05    &1.5815094e-04    &5.8324203e-05    &1.557116 \\
150   &2.6582116e-05    & 7.0290756e-05   &2.5928395e-05    &3.058464 \\
200   &1.4952174e-05    & 3.9537932e-05   &1.4584885e-05    &5.223770 \\
250   & 9.5693235e-06   &2.5303467e-05    &9.333862e-06      &8.087865 \\
300   &6.6453513e-06    &1.7571084e-05	 & 6.4814216e-06   &11.68208 \\
\hline 
\end{tabular}
\label{table8} 
\end{table} 	

\begin{table}[tbp]
\caption{Absolute errors for Example \ref{ex4} ($\alpha=1, m=300$)}
\centering
\begin{tabular}{c c c c}
\hline
\multicolumn{1}{p{1.1cm}}{\centering $t$} &\multicolumn{1}{p{1.9cm}}{\centering $x\left(t\right)$}
& \multicolumn{1}{p{1.9cm}}{\centering $y\left(t\right)$} 
& \multicolumn{1}{p{1.9cm}}{\centering $z\left(t\right)$}\\
\hline
0    &1.50350000e-15      &6.563500000e-13    	&0\\
0.1  &9.99999999e-08     &9.999999999e-08 		&9.9999999e-08\\
0.2  &1.00000000e-07     &4.999999999e-07 		&4.0000000e-07\\
0.3  &1.99999999e-07     &1.199999999e-06 		&1.0999999e-06\\
0.4  &3.00000000e-07     &2.300000000e-06  		&2.0000000e-06\\
0.5  &6.99999999e-07  	  &3.79999999e-06  		&3.2000000e-06\\
0.6  &1.19999999e-06  	  &5.80000000e-06  		&4.5000000e-06\\
0.7  &1.8999999e-06  	  &8.29999999e-06 		&5.5999999e-06\\
0.8  &2.9999999e-06  	 &1.11999999e-05  		&6.4000000e-06\\
0.9  &4.6000000e-06  	 &1.43000000e-05  		&6.2000000e-06\\
1    &6.7000000e-06  	  &1.7500000000-05 		&4.2000000e-06\\
\hline
\end{tabular}
\label{table9} 
\end{table} 
\begin{table}[tbp]
\caption{HF solution of Example 4 ($\alpha=0.5, m=300$)}
\centering
\begin{tabular}{c c c c}
\hline
\multicolumn{1}{p{1.0cm}}{\centering $t$} & \multicolumn{1}{p{1.0cm}}{\centering $x(t)$} &\multicolumn{1}{p{1.5cm}}{\centering $y(t)$} &\multicolumn{1}{p{1.5cm}}{\centering $z(t)$} \\
\hline
0     & 0                  &0                  &1               \\
0.1   &0.0468822     &0.0048011    &1.1140776  \\
0.2   &0.1256645     &0.0446632    &1.2933290  \\
0.3   &0.2069145     &0.1654736    &1.5815223  \\
0.4   &0.2635181     &0.4099565    &1.9951711  \\
0.5   &0.2692106     & 0.7956850    &2.5023956  \\
0.6   &0.2064134     &1.2918180    &3.0099141  \\
0.7   &0.0771310     &1.8063493    &3.3742327  \\
0.8   &-0.087456     & 2.1991365    &3.4496062  \\
0.9   &-0.224929     &2.3316429     &3.1706707  \\
1      &-0.253600     &2.1480028     &2.6408020  \\
\hline
\end{tabular}
\label{table10} 
\end{table} 

\begin{table}[tbp]
\caption{HF solution of Example 4 ($\alpha=0.75, m=300$)}
\centering
\begin{tabular}{c c c c}
\hline
\multicolumn{1}{p{1.0cm}}{\centering $t$} & \multicolumn{1}{p{1.0cm}}{\centering $x(t)$} &\multicolumn{1}{p{1.5cm}}{\centering $y(t)$} &\multicolumn{1}{p{1.5cm}}{\centering $z(t)$} \\
\hline
0      &0                   &0              &1             \\
0.1   & 0.0220867    &0.000664  &1.1049819\\
0.2   & 0.0738818    &0.008001  &1.2359541\\
0.3   &0.1483868    &0.034736    &1.4156685\\
0.4   &0.2403191    &0.098820    &1.6654756\\
0.5   &0.3435439    & 0.222038   &2.0030821\\
0.6   &0.4503364    &0.427807     &2.4386113\\
0.7   &0.5510685    &0.737687    &2.9690832\\
0.8   &0.6342655    &1.166249    & 3.5714742\\
0.9   & 0.6871904    &1.714147    &4.1949900\\
1      &0.6972347    &2.359606     &4.7540760\\
\hline
\end{tabular}
\label{table11} 
\end{table} 
\begin{example}\label{ex4}
Let us consider the fractional order linear differential-algebraic equations
\begin{equation} \label{GrindEQ__66_} 
{}_{0}^{C} D_{t}^{\alpha } x\left(t\right)-t^{2} x\left(t\right)+y\left(t\right)-2t=0, t\in \left[0,1\right], x\left(0\right)=0,            
\end{equation} 
\begin{equation} \label{GrindEQ__67_} 
{}_{0}^{C} D_{t}^{\alpha } y\left(t\right)-2z\left(t\right)+2(t+1)=0, \alpha \in \left(0,1\right], y\left(0\right)=0,            
\end{equation} 
\begin{equation} \label{GrindEQ__68_} 
z\left(t\right)-y\left(t\right)-2tx\left(t\right)+t^{4} -t-1=0, z\left(0\right)=1.             
\end{equation} 
In case of $\alpha =1$, the exact solution is $x\left(t\right)=t^{2} $, $y\left(t\right)=t^{4} $, $z\left(t\right)=2t^{3} +t+1$.
\end{example}
The piecewise linear approximate solutions (Tables \ref{table9} to \ref{table11}) produced by the proposed method match the semi-analytical solutions via fractional differential transform method and homotopy analysis method in \cite{ref32} (see Example 4 in \cite{ref32}). The results show that our numerical method is not only accurate but also computationally attractive (Tables \ref{table7} and \ref{table8}).
\begin{example}\label{ex5}
The fractional order version of Chemical Akzo Nobel problem is 
\begin{equation} \label{GrindEQ__69_} 
{}_{0}^{C} D_{t}^{\alpha } y_{1} \left(t\right)=-2k_{1} y_{1}^{4} \left(t\right)y_{2}^{0.5} \left(t\right)+k_{2} y_{3} \left(t\right)y_{4} \left(t\right)-\frac{k_{2} }{K} y_{1} \left(t\right)y_{5} \left(t\right)-B09,         
\end{equation} 
\begin{equation} \label{GrindEQ__70_} 
{}_{0}^{C} D_{t}^{\alpha } y_{2} \left(t\right)=-0.5k_{1} y_{1}^{4} \left(t\right)y_{2}^{0.5} \left(t\right)-k_{3} y_{1} \left(t\right)y_{4}^{2} \left(t\right)-0.5k_{4} y_{6}^{2} \left(t\right)y_{2}^{0.5} \left(t\right)+ B010,   
\end{equation} 
\begin{equation} \label{GrindEQ__71_} 
{}_{0}^{C} D_{t}^{\alpha } y_{3} \left(t\right)=k_{1} y_{1}^{4} \left(t\right)y_{2}^{0.5} \left(t\right)-k_{2} y_{3} \left(t\right)y_{4} \left(t\right)+\frac{k_{2} }{K} y_{1} \left(t\right)y_{5} \left(t\right),           
\end{equation} 
\begin{equation} \label{GrindEQ__72_} 
{}_{0}^{C} D_{t}^{\alpha } y_{4} \left(t\right)=-k_{2} y_{3} \left(t\right)y_{4} \left(t\right)+\frac{k_{2} }{K} y_{1} \left(t\right)y_{5} \left(t\right)-2k_{3} y_{1} \left(t\right)y_{4}^{2} \left(t\right),           
\end{equation} 
\begin{equation} \label{GrindEQ__73_} 
{}_{0}^{C} D_{t}^{\alpha } y_{5} \left(t\right)=k_{2} y_{3} \left(t\right)y_{4} \left(t\right)-\frac{k_{2} }{K} y_{1} \left(t\right)y_{5} \left(t\right)+k_{4} y_{6}^{2} \left(t\right)y_{2}^{0.5} \left(t\right),           
\end{equation} 
\begin{equation} \label{GrindEQ__74_} 
0=K_{S} y_{1} \left(t\right)y_{4} \left(t\right)-y_{6} \left(t\right),            
\end{equation} 
where $B09=k_{3} y_{1} \left(t\right)y_{4}^{2} \left(t\right)$, $B010=klA\left(\left(p\left(CO_{2} \right) H\right)-y_{2} \left(t\right)\right)$.\\
The values of parameters are\\
$k_{1} =18.7, k_{2} =0.58, k_{3} =0.09, k_{4} =0.42, K_{S} =115.83, K=34.4, klA=3.3$, \\ $H=737, p\left(CO_{2} \right)=0.9$,\\
and the initial values are \\
$y_{1} \left(0\right)=0.444, y_{2} \left(0\right)=0.00123,y_{3} \left(0\right)=0, y_{4} \left(0\right)=0.007,y_{5} \left(0\right)=0$,\\
$y_{6} \left(0\right)=K_{S} \times 0.444\times 0.007$.
\end{example}
The proposed TFs based numerical method is applied to the Chemical Akzo Nobel problem (CANP). The step size of 1/200 is used for all computations. The numerical solution by modified Rosenbrock method of order 2, MRM2 \cite{ref38} is too determined to authenticate that the piecewise linear HF solution by the proposed numerical method converges to the original solution (i.e. numerical solution obtained by MRM2) when $\alpha $ equals 1. In addition to MRM2, Adomian decomposition method (ADM), fractional differential transform method with Adomian polynomials (FDTM) \cite{ref39} and homotopy analysis method (HAM) are employed to solve CANP and the respective approximate solutions are plotted in Figures \ref{Figure1} to \ref{Figure6}. The semi-analytical techniques; ADM, FDTM and HAM exhibited numerical instability and failed to approximate the solution of CANP in both the integer order case and non-integer order case. It is seen from Figures \ref{Figure7} and \ref{Figure8} that, in case of $\alpha =1$, the HF solution is in good comply with the solution obtained by MRM2 and the fractional order HF solutions approach the integer order solution in the limit of $\alpha $ tends to 1. Therefore, the proposed HF based numerical method is so powerful that it can handle even highly nonlinear, high dimensional and stiff differential-algebraic equations of arbitrary order. The time elapsed during each computation is recorded and tabulated in Table \ref{table7}. In both cases (integer and fractional order), the proposed method needs higher CPU usage but such higher computational time is justified to get acceptable approximate solutions to such a high dimensional and stiff system.

\begin{figure}
\centering
 \includegraphics[scale=0.7 ]{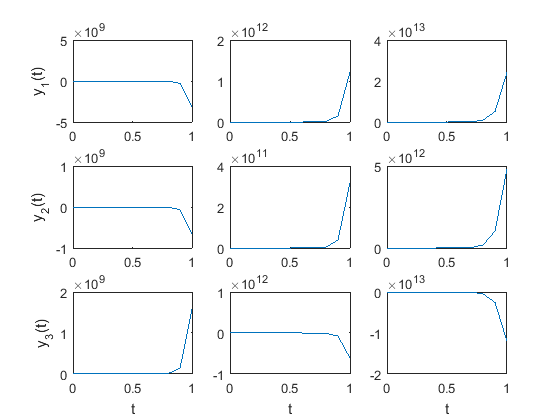}
 \caption{ADM solution of Chemical Akzo Nobel problem for $\alpha =1$(first column), $\alpha =0.8$(second column), $\alpha =0.6$(third column)}
 \label{Figure1}
 \end{figure}

\begin{figure}
\centering
 \includegraphics[scale=0.7 ]{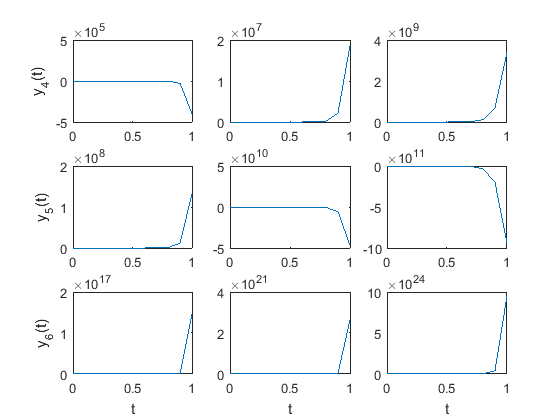}
 \caption{ADM solution of Chemical Akzo Nobel problem for $\alpha =1$(first column), $\alpha =0.8$(second column), $\alpha =0.6$(third column)}
 \label{Figure2}
 \end{figure}
\begin{figure}
\centering
 \includegraphics[scale=0.75]{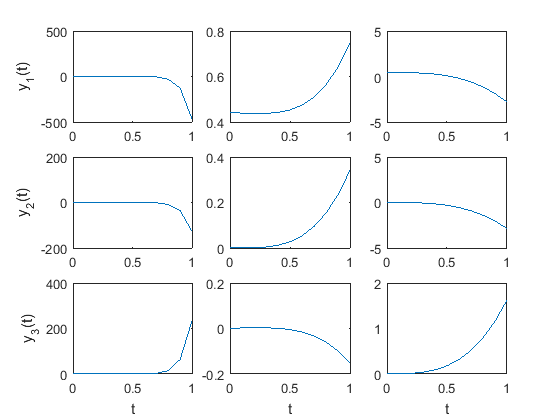}
 \caption{ FDTM solution of Chemical Akzo Nobel problem for $\alpha =1$(first column), $\alpha =0.8$(second column), $\alpha =0.6$(third column)}
 \label{Figure3}
 \end{figure}

\begin{figure}
\centering
 \includegraphics[scale=0.7 ]{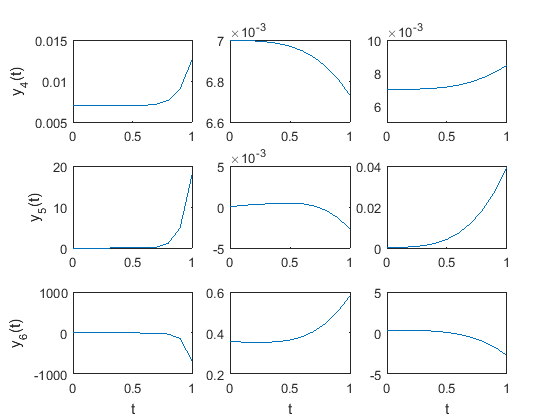}
 \caption{FDTM solution of Chemical Akzo Nobel problem for $\alpha =1$(first column), $\alpha =0.8$(second column), $\alpha =0.6$(third column)}
 \label{Figure4}
 \end{figure}

\begin{figure}
\centering
 \includegraphics[scale=0.75 ]{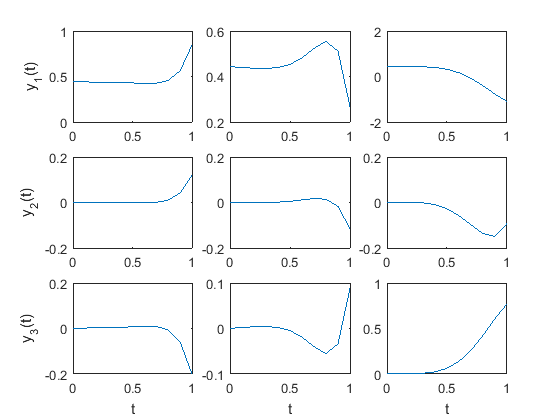}
 \caption{HAM solution of Chemical Akzo Nobel problem for $\alpha =1$(first column), $\alpha =0.8$(second column), $\alpha =0.6$(third column)}
 \label{Figure5}
 \end{figure}


\begin{figure}
\centering
 \includegraphics[scale=0.7 ]{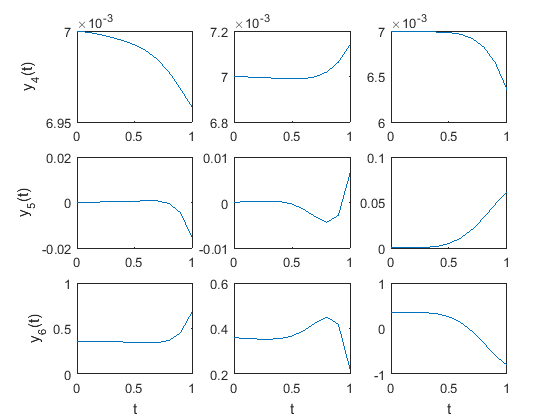}
 \caption{HAM solution of Chemical Akzo Nobel problem for $\alpha =1$(first column), $\alpha =0.8$(second column), $\alpha =0.6$(third column)}
 \label{Figure6}
 \end{figure}


\begin{figure}
\centering
 \includegraphics[scale=0.37 ]{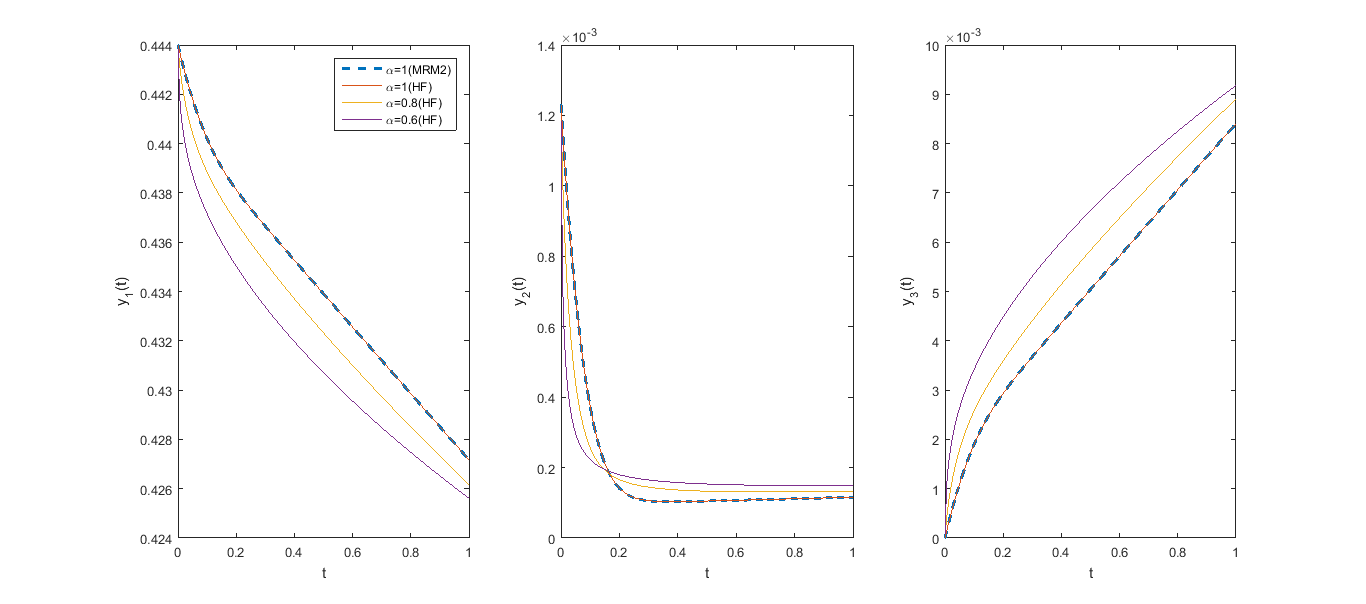}
 \caption{Comparison of HF solutions of Chemical Akzo Nobel problem for different$\alpha $}
 \label{Figure7}
 \end{figure}


\begin{figure}
\centering
 \includegraphics[scale=0.35]{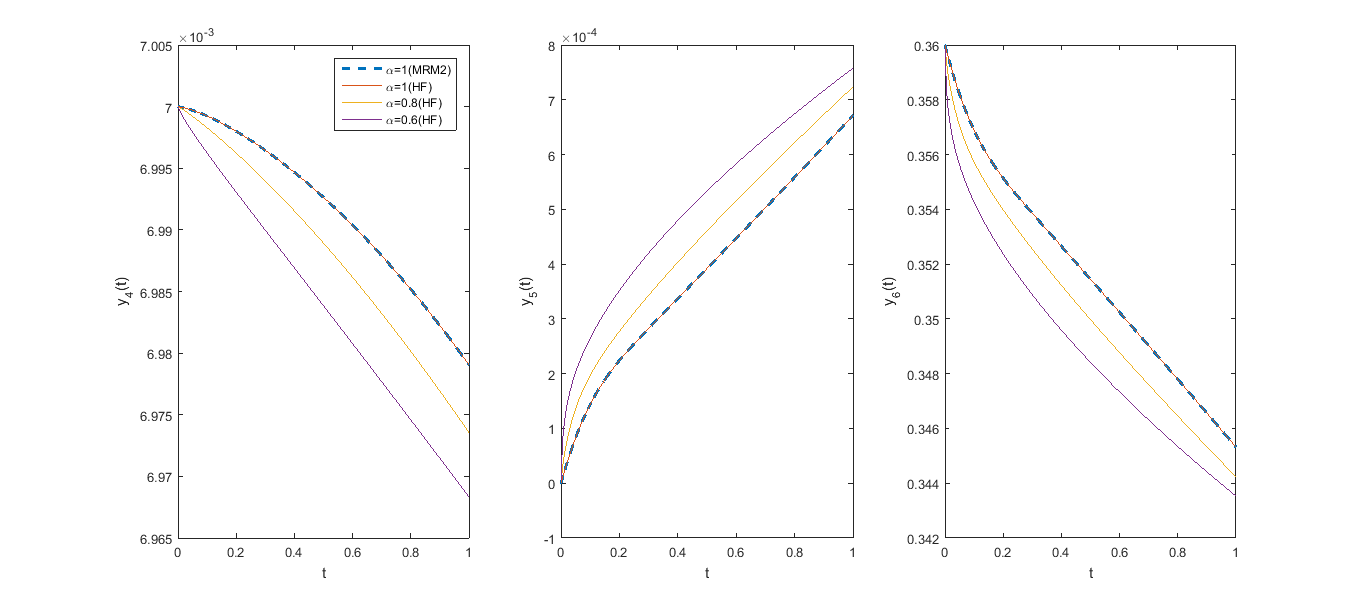}
 \caption{Comparison of HF solutions of Chemical Akzo Nobel problem for different$\alpha $}
 \label{Figure8}
 \end{figure}
\section{Concluding remarks}\label{sec7}
\noindent We have the following conclusions.
{\renewcommand{\labelitemi}{$\triangleright$}
\begin{itemize}
\item The derived HF estimates for the Riemann-Liouville fractional order integral worked well and can be used to approximate integration of any order.
\item The proposed numerical method is capable enough to solve numerically a wide variety (stiff and non-stiff) of differential-algebraic equations of arbitrary order.
\end{itemize}}

\end{document}